\newcommand{\R}{{\mathbb R}}
\newcommand{\I}{{\mathbb I}}
\newcommand{{\tildeGamma}}{g_1}
\def\leftB{[\![}
\def\rightB{]\!]}
\newcommand{\E}{{\mathbb E}}\makeatletter
\newcommand{\N}{{\mathbb N}}
\theoremstyle{definition}
\newtheorem*{condition*}{\protect\conditionname}
\theoremstyle{plain}
\newtheorem{thm}{\protect\theoremname}[section]
\theoremstyle{plain}
\newtheorem{prop}[thm]{\protect\propositionname}
\theoremstyle{definition}
\theoremstyle{plain}
\newtheorem{lem}[thm]{\protect\lemmaname}
\theoremstyle{plain}
\theoremstyle{plain}
\newtheorem{remark}[thm]{\protect\remarkname}
\theoremstyle{plain}
\theoremstyle{plain}
\newtheorem*{assumption*}{\protect\assumptionname}
\numberwithin{equation}{section}
\providecommand{\assumptionname}{Assumption}
\providecommand{\conditionname}{Condition}
\providecommand{\corollaryname}{Corollary}
\providecommand{\definitionname}{Definition}
\providecommand{\lemmaname}{Lemma}
\providecommand{\propositionname}{Proposition}
\providecommand{\theoremname}{Theorem}
\providecommand{\remarkname}{Remark}
\providecommand{\examplename}{Example}
\begin{document}

\begin{frontmatter}
\title{Convergence rate of the Euler-Maruyama scheme applied to diffusion processes with $L^q-L^\rho $  drift coefficient and additive noise}
\runtitle{ Euler scheme  with $L^q-L^\rho $  drift and additive noise}

\begin{aug}
\author[A]{\fnms{Benjamin} \snm{Jourdain}\ead[label=e1]{benjamin.jourdain@enpc.fr}} and
\author[B]{\fnms{St\'ephane} \snm{Menozzi}\ead[label=e2,mark]{stephane.menozzi@univ-evry.fr}}
\address[A]{Cermics, Ecole des Ponts, INRIA, Marne-la-Vall\'ee, France, \printead{e1}}

\address[B]{Laboratoire de Mod\'elisation Math\'ematique d'Evry (LaMME), Universit\'e d'Evry Val d'Essonne, Universit\'e Paris Saclay, UMR CNRS 8071, 23 Boulevard de France, 91037 Evry, France and Laboratory of Stochastic Analysis, HSE University, Moscow, Pokrovsky Boulevard, 11, Russian Federation, \printead{e2}}
\end{aug}

\begin{abstract}
We are interested in the time discretization of 
 stochastic differential equations with additive $d$-dimensional Brownian noise and $L^q-L^\rho $  drift coefficient when the condition $\frac d\rho+\frac 2q<1$, under which Krylov and R\"ockner \cite{kryl:rock:05} proved existence of a unique strong solution, is met. We show weak convergence with order $\frac 12(1-(\frac d\rho+\frac 2q))$ which corresponds to half the distance to the threshold for the Euler scheme with randomized time variable and cutoffed drift coefficient  so that its contribution on each time-step does not dominate the Brownian contribution. More precisely, we prove that both the diffusion and this Euler scheme admit transition densities and that the difference between these densities is bounded from above by the time-step to this order multiplied by some centered Gaussian density. 
\end{abstract}

\begin{keyword}[class=MSC2020]
\kwd[Primary ]{ 60H35, 60H10}
\kwd[; secondary ]{65C30, 65C05}
\end{keyword}

\begin{keyword}
\kwd{diffusion processes, singular drift, Euler scheme, weak error analysis}
\end{keyword}

\end{frontmatter}


\section{Introduction}

In the present paper, we are interested in the Euler-Maruyama discretization of the stochastic differential equation
\begin{equation}
  X_t = x + W_t + \int_0^t b(s,X_s)\,ds,\quad t\in[0,T],\label{eds}
\end{equation}
where $x\in\R^d$, $(W_t)_{t\ge 0}$ is a $d$-dimensional Brownian motion on some filtered probability space $(\Omega,\mathcal F, (\mathcal F_t)_{t\ge 0},\mathbb P) $, $T\in(0,+\infty)$ is a finite time horizon and the drift coefficient $b:[0,T]\times\R^d\to\R^d$ is measurable and satisfies the integrability condition : $\|b\|_{L^{q}([0,T] ,L^{\rho}(\R^d))}=:\|b\|_{L^{q}-L^{\rho}}<\infty$  for some $\rho,q>0$ such that
\begin{equation}
\label{COND_KR}
\rho\ge 2\mbox{ and }\frac d\rho+\frac 2q<1, 
\end{equation}
which clearly implies that $\rho>d$ and $q>2$. When $\rho$ and $q$ are both finite,
$$\|b\|_{L^{q}-L^{\rho}}=\left(\int_0^T\left(\left(\int_{\R^d}|b(t,y)|^\rho dy\right)^{1/\rho}\right)^qdt\right)^{1/q}$$
and when $\rho=+\infty$ then $\left(\int_{\R^d}|b(t,y)|^\rho dy\right)^{1/\rho}$ is replaced by the essential supremum of $y\mapsto |b(t,y)|$ with respect to the Lebesgue measure on $\R^d$ while, when $q=+\infty$, the power $1/q$ of the integral of the $q$-th power of the function of the time variable over $[0,T]$ is replaced by the essential supremum of this function with respect to the Lebesgue measure on $[0,T]$. 
This framework was introduced by Krylov and R\"ockner \cite{kryl:rock:05}, who established strong existence and uniqueness for the above equation under the integrability condition \eqref{COND_KR}. The critical case has recently been treated by Krylov \cite{kryl:20} and R\"ockner and Zhao \cite{rock:zhao:20}, \cite{rock:zhao:21} who respectively addressed the strong well-posedness of \eqref{eds} in the time-homogeneous case (then $q=+\infty$) when $\rho=d$, and the weak and strong well-posedness when $ \frac{d}{\rho}+\frac{2}q=1$ for a time dependent drift coefficient.

Let us emphasize that  dynamics of type \eqref{eds} appear in many applicative fields.  
In \cite{kryl:rock:05}, the authors discussed the connection with some models arising from statistical mechanics or interacting particle systems, see \cite{albe:kond:rock:03}. Singular kernels appear as well in several domains 
related to mathematical physics like fluid dynamics or electro-magnetism. We can for instance mention the Biot-Savart kernel behaving in $y/|y|^{d} $  near the origin. A similar singularity  also appears in the parabolic elliptic Keller-Segel equation. Let us emphasize that for such singularity, the integrability conditions \eqref{COND_KR} are not met. However,  in dimension $d=2$, a kernel behaving around 0 as $ |y|^{\varepsilon+1-d}$, $\varepsilon>0$ could be considered. In this last setting we can refer e.g. to the work by Jabin and Wang \cite{jabi:wan:18} for related applications.  We can eventually quote the important work of Zhang and Zhao \cite{zhan:zhao:21} who established existence of a stochastic Lagragian path for a Leray solution of the 3d Navier-Stokes equation. In that case, $d=3$, $\rho=2 $, $q=\infty $ so condition \eqref{COND_KR} is not met but the drift has some additional properties, namely its gradient also belongs to $L^2-L^2$.

It is therefore important to address the question of the approximation of \eqref{eds}. To this end, the easiest, and maybe the most natural at first sight, way consists in introducing the Euler-Maruyama scheme with step $h>0$. Anyhow, in the current singular context it needs to be tailored appropriately.
Namely, we consider  a \textit{cutoff} with order related to the singularity of the drift. The  coefficient with cutoff is defined by
\begin{equation}
   b_h(t,y)=\I_{\{|b(t,y)|>0\}}\frac{|b(t,y)|\wedge (Bh^{-(\frac 1q+\frac d{2\rho})})}{|b(t,y)|}b(t,y),\;(t,y)\in[0,T]\times\R^d\label{cutoffb}
\end{equation}
for some constant $B\in (0,+\infty)$. When $b\in L^\infty-L^\infty$, we choose $B=\|b\|_{L^\infty-L^\infty}$ so that $b_h=b$ for each step $h=T/n$, $n\in\N^*$. Since, according to \eqref{COND_KR}, $\frac 1q+\frac d{2\rho}<\frac 1 2$, the contribution of the cutoffed drift on each time step does not dominate the Brownian contribution.

Furthermore, to get rid of any assumption stronger than mere measurability (and integrability) concerning the regularity of the drift coefficient with respect to the time variable, we choose to randomize the time variable.

   The time randomization relies on independent random variables $(U_k)_{k \in \left\llbracket0,n-1 \right\rrbracket}$, where from now on we will denote by $\leftB\cdot,\cdot\rightB $ the integer intervals, which are respectively distributed according to the uniform law on $[kh,(k+1)h]$ and independent from $(W_t)_{t\ge 0}$. Notice that  this sequence is of course not needed when the drift coefficient is time-homogeneous.
The resulting scheme is initialized by $X^h_0=x$ and evolves inductively on the regular time-grid $(t_k=kh)_{k \in \left\llbracket0,n\right\rrbracket}$ with $h=\frac T n$ by:
  \begin{equation}
    X^{h}_{t_{k+1}}=X^h_{t_k}+\left(W_{t_{k+1}}-W_{t_k}\right)+b_h\left(U_k, X^h_{t_k}\right)h
    . \label{euler}
  \end{equation}

 

  We then consider the following continuous time interpolation of the scheme:
  \begin{equation}
    X^h_t = x + W_t + \int_0^t b_h\left(U_{{\lfloor \frac s h\rfloor}},X^h_{\tau^h_s}\right)ds,\quad t\in[0,T] \mbox{ where } \tau^h_s  =\lfloor s/h\rfloor h 
    \label{conteuler}.
  \end{equation}
The cutoff threshold in \eqref{cutoffb} permits to get rid of the drift in the Gaussian estimates that we will derive for the transition densities of the scheme : for all $s\in (t_k,t_{k+1}] $ and $y\in \R^d $,
$$\exp\left(c\frac{|b_h(t,y)(s-t_k)|^2}{s-t_k}\right)\le \exp(cB^2 h^{1-(\frac 2q+\frac d\rho)}) \overset{\eqref{COND_KR}}{\underset{h\rightarrow 0}{\longrightarrow}} 1. $$
For this sole purpose, the natural threshold would have been in $h^{-1/2} $ rather than $h^{-(\frac 1q+\frac d{2\rho})}$. The interest of the stronger cutoff is that it also permits to control in the proof of Theorem \ref{MTHM}  the error on the first time-step when the drift coefficient is computed at the deterministic initial position $x$, while it is computed at positions with densities satisfying some Gaussian estimates at the subsequent steps. The error bound  of Theorem \ref{MTHM} remains valid for the $h^{-\frac 12}$ cutoff scale provided we set the drift to zero on the first time step. The alternative scheme writes:
\begin{equation}
   \bar  X^{h}_{t_{k+1}}=\bar X^h_{t_k}+\left(W_{t_{k+1}}-W_{t_k}\right)+\bar b_h\left(U_k, \bar X^h_{t_k}\right)h
    ,\label{euler_BIS}
  \end{equation}
 with  \begin{equation}
   \bar b_h(t,y)=\I_{\{t\ge h,|b(t,y)|>0\}}\frac{|b(t,y)|\wedge (Bh^{-\frac 12})}{|b(t,y)|}b(t,y),\;(t,y)\in[0,T]\times\R^d.\label{cutoffb_BIS}
\end{equation}
It can be  convenient to choose one scheme or the other. The first one writes as a usual discretization scheme, but needs the drift $b(s,x)$ to be defined for all $x\in\R^d$ and almost any $s\in [0,h]$.
This is not too restrictive if one has in mind some physical models for which the singularities are precisely located at some points in space, where it would actually suffice to assign the drift an arbitrary prescribed value.
On the other hand, from the theoretical viewpoint, the scheme \eqref{euler_BIS} allows to simply consider a class of functions in $L^q-L^\rho$. 
Eventually, let us stress that, apart from the contributions of the cutoff error and the first time step (see in particular the analysis of the terms $\Delta_t^2 $ and $\Delta^5_t$ in Section \ref{THE_SEC_FOR_PROOF_MTHM}), the choice of the scheme has a minimal impact on the proof of the error estimation since both dynamics \eqref{euler} and \eqref{euler_BIS} satisfy the Gaussian estimates of Proposition \ref{EST_DENS_SCHEME}.
\\

While the convergence properties of the Euler-Maruyama scheme are well understood for SDEs with smooth coefficients, the case of irregular coefficients is still an active field of research.
Concerning the strong error, the additive noise case is investigated in \cite{HalKloe} where Halidias and Kloeden only prove convergence and in \cite{DarGer,NeuSzo} where rates are derived. Dareiotis and Gerencs\'er \cite{DarGer} obtain root mean square convergence with order $1/2-$ (meaning $1/2-\varepsilon$ for arbitrarily small $\varepsilon >0$) in the time-step for bounded and Dini-continuous time-homogeneous drift coefficients and check that this order is preserved in dimension $d=1$ when the Dini-continuity assumption is relaxed to mere measurability. In the scalar $d=1$ case, Neuenkirch and Sz\"olgyenyi \cite{NeuSzo} assume that the drift coefficient is the sum of a $ \mathcal{C}^{2}_b$ part and a bounded integrable irregular part with a finite Sobolev-Slobodeckij semi-norm of index $\kappa \in (0,1)$. They prove root mean square convergence with order $\frac{3}{4}\wedge\frac{1 + \kappa}{2} -$ for the equidistant Euler-Maruyama scheme, the cutoff of this order at $\frac{3}{4}$ disappearing for a suitable non-equidistant time-grid.
Note that an exact simulation algorithm has been proposed by \'Etor\'e and Martinez \cite{EtoMArt} for one-dimensional SDEs with additive noise and time-homogeneous and smooth except at one discontinuity point drift coefficient.
\\More papers have been devoted to the strong error of the Euler scheme for SDEs with a non constant diffusion coefficient : \cite{GyoKryl,Gyo,Yan,GR,NgTag1,NgTag2,BHY}. Recent attention has also been paid to the Euler-Maruyama discretization of SDEs with a piecewise Lipschitz drift coefficient and a globally Lipschitz diffusion coefficient which satisfies some non-degeneracy condition on the discontinuity hypersurface of the drift coefficient : \cite{LeoSzo1,LeoSzo2,LeoSzo3,MGYaro1,NeunSzoSpr}.

  We will here focus on the so-called \textit{weak error} between the diffusion and the Euler scheme \eqref{euler}, namely the quantity
$${\mathscr E}(x,T,\varphi,h):=\E_x[\varphi(X_T^{h})]-\E_x[\varphi(X_T)],$$
for a suitable class of test functions $\varphi $ which can even be a Dirac mass. In the additive noise case considered in the present paper, Kohatsu-Higa, Lejay and Yasuda \cite{KohY}, prove that for $\varphi$ thrice continuously differentiable with polynomially growing derivatives, the convergence holds with order $1/2-$ when $d\ge 2$ (resp. $1/3-$ when $d=1$) and the drift coefficient is time homogeneous, bounded and Lipschitz except on a set $G$ such that $\varepsilon^{-d}$ times the Lebesgue measure of $\{x\in\R^d:\inf_{y\in G}|x-y|\le\varepsilon\}$ is bounded. Suo, Yuan and Zhang \cite{SuYuZ} prove convergence in total variation with order $\frac{\alpha}{2}$ for time-homogeneous drift coefficients with at most linear growth and satisfying an integrated against some Gaussian measure $\alpha$-H\"older type regularity condition. 
\\

In the much more general multiplicative noise setting, when the diffusion and drift coefficients are smooth, from the seminal work of Talay and Tubaro \cite{tala:tuba:90} to the extensions to the hypoelliptic setting, see e.g. the works by  Bally and Talay \cite{ball:tala:96:1}, \cite{ball:tala:96:2}, it has been established that the above weak error 
(with $b\left(t_k,X^h_{t_k}\right)$ replacing $b_h\left(U_{k}, X^h_{t_k}\right)$ in the right-hand side of \eqref{euler}) has order one w.r.t. the discretization parameter $h$. When $\varphi $ is a Dirac mass we can also refer to \cite{kona:mamm:02} or to \cite{MenoKona}  where non-degenerate bounded H\"older coefficients are considered (see also \cite{NFrik} in the framework of skew diffusions). The common point in all these results is the key role played by the Feynman Kac partial differential equation (PDE) associated with \eqref{eds} which permits to write the error as the expectation of a time integral of the sum of terms with derivatives of the solution to this PDE multiplied by the difference between the drift and squared diffusion coefficients at the current time and position of the Euler scheme and at the last discretization time and corresponding position. This permits to exploit the regularity of these coefficients to derive the order of convergence.

It is however clear that for \textit{rough} coefficients, like in the current $L^q-L^\rho$ framework, another strategy is needed. For a bounded measurable drift ($\rho=q=\infty$), a new idea was proposed in \cite{benc:jour:20} consisting in comparing the expansions of the densities at time $T$ of the diffusion and its Euler scheme with randomized time variable along the solution of the heat equation (in place of the Feynman-Kac PDE) with terminal condition $\varphi$ equal to $\delta_y(dz)$. This solution is $(s,z)\mapsto g_1(T-s,y-z)$ where $g_1(t,.)$ denotes the Brownian density at time $t>0$.

We will check in Propositions \ref{THM_HK_CONT} and \ref{EST_DENS_SCHEME} that both the SDE \eqref{eds} and the scheme \eqref{conteuler} admit transition densities which can be expanded around this Gaussian density as expected from a formal application of It\^o's formula. More precisely, for $s\in[0,T)$ and $x\in\R^d$, the solution to \begin{equation}
  dX_t=dW_t+b(t,X_t)dt\label{sdediff}\end{equation} started from $x$ at time $s$ admits at time $t\in (s,T]$ a density with respect to the Lebesgue measure on $\R^d$ denoted by $y\mapsto \Gamma(s,x,t,y)$ and, as expected formally by computing $dg_1(T-s,y-X_s)$ by It\^o's formula and taking expectations, 
\begin{align*}
\forall y \in \R^d, \;\Gamma(0,x,T,y)
  =g_1(T,y-x)-\int_{0}^{T}\E\left[b(s,X_{s})\cdot\nabla_y g_1(T-s,y-X_s)\right]ds.
  \end{align*}
In a similar way, for $k\in\left\llbracket0,n-1 \right\rrbracket$ and $x\in\R^d$, the solution to \begin{equation}
   dX^h_t=dW_t+b_h(U_{\lfloor \frac t h\rfloor},X^h_{\tau^h_t})dt\label{eulerdiff}\end{equation} (resp. the same dynamics with $b_h$ replaced by $\bar b_h$) started from $x$ at time $t_k$ admits at  time $t\in (t_k,T]$ a density with respect to the Lebesgue measure on $\R^d$ denoted by $y\mapsto \Gamma^h(t_k,x,t,y)$ (resp. $y\mapsto \bar\Gamma^h(t_k,x,t,y)$) and 
\begin{align}
 \forall y \in \R^d, \;\Gamma^h(0,x,T,y)
  =g_1(T,y-x)-\int_{0}^{ T}\E\left[b_h(U_{\lfloor \frac sh\rfloor },X^h_{\tau_s^h})\cdot\nabla_y g_1(T-s,y-X^h_s)\right]ds
\end{align}
(resp. the same equation holds with $\Gamma^h$ and $b_h$ replaced by $\bar\Gamma^h$ and $\bar b_h$).

Taking the difference of the two expansions, we obtain
\begin{align}
\Gamma^h(0,x,T,y)&-\Gamma(0,x,T,y)\notag\\
=&\int_{0}^T ds[\Gamma(0,x,s,z)-\Gamma^h(0,x,s,z)]b(s,z)\cdot\nabla_y g_1(T-s,y-z)dz \notag\\
  &+\int_{0}^T ds\int_{\R^d}\Gamma^h(0,x,s,z)(b(s,z)-b_h(s,z))\cdot\nabla_y g_1(T-s,y-z)dz\notag\\&+\int_{0}^T ds\int_{\R^d}[\Gamma^h(0,x,s,z)-\Gamma^h(0,x,\tau^h_s,z)]b_h(s,z).\nabla_y g_1(T-s,y-z) dz\notag\\
&+\E\bigg[\int_{0}^T dsb_h(U_{\lfloor s/h\rfloor},X_{\tau^h_s}^h)\cdot (\nabla_y g_1(T-U_{\lfloor s/h\rfloor},y-X_{\tau^h_s}^h)\notag\\
&\hspace*{3cm}-\nabla_y g_1(T-s,y-X_s^h))\bigg].
\label{THIRD_EXP_ERR}
\end{align}

This formula actually emphasizes that, in order to give a convergence rate for the Euler approximation, two preliminary results are needed:
\begin{trivlist}
\item[-] estimations on the heat kernel $\Gamma^h$ of the Euler scheme in order to deal with the second (cutoff error) and fourth terms in the right-hand side,
\item[-] estimations of its H\"older modulus w.r.t. to the forward time variable to deal with the third term in the right-hand side.
\end{trivlist}

These properties are established in Proposition \ref{EST_DENS_SCHEME} below using an approach inspired from \cite{meno:pesc:zhan:21}. The first term in the right-hand side will be treated through a Gronwall type argument.

 In \cite{benc:jour:20}, for $q=\rho=+\infty $, starting from a similar decomposition (actually the term involving the modulus of the heat kernel with respect to the forward time variable is written with the transition density of the diffusion instead of that of the Euler scheme), the authors derived a convergence rate of order $1/2$ w.r.t. $h$  for the total variation distance between the law of the diffusion and its Euler scheme for a bounded drift. 

 In our main result, we extend this estimation in a precised way to the case $b\in L^q-L^\rho$ with $\frac{d}{\rho}+\frac{2}{q}\in (0,1)$ by showing that the difference between the densities is bounded from above by $C h^{\frac{1}{2}\left(1-\left(\frac{d}{\rho}+\frac 2q\right)\right)}$ multiplied by some centered Gaussian density. In fact, in the case $q=\rho=+\infty$ for both discretization schemes and when $(d,q)=(1,\infty)$ for the scheme \eqref{conteuler}, the estimation is perturbed by an extra logarithmic factor.
 
 \begin{thm}[Convergence Rate for the Euler-Maruyama approximation with $L^q-L^\rho $ drift]\label{MTHM}
 Assume that \eqref{COND_KR} holds. Set:
$$\alpha:= 1-\left(\frac{d}{\rho}+\frac 2q\right).$$
 Then,  for all $c>1 $ there exists a constant $C_{c}<\infty$ s.t. for  all $h=T/n$ with $n\in\N^*$, and all $t\in(0,T]$, $x,y\in \R^d $
 \begin{align*}
   |\Gamma^h(0,x,t,y)-\Gamma(0,x,t,y)| &\le C_{c}  h^{\frac{\alpha} 2}\left(1+(\I_{\{\alpha=1\}}+\I_{\{(d,q)=(1,\infty)\}})\ln n\right)g_c(t,y-x),\\
   |\bar\Gamma^h(0,x,t,y)-\Gamma(0,x,t,y)| &\le C_{c}  h^{\frac{\alpha} 2}\left(1+\I_{\{\alpha=1\}}\ln n\right)g_c(t,y-x).
 \end{align*}
where $g_c(u,\cdot)$ stands for the density of the centered Gaussian vector in dimension $d$ with covariance matrix $ cuI_d,\ u>0$.
\end{thm}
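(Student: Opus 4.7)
The plan is to work directly from the decomposition \eqref{THIRD_EXP_ERR} and show that each of the four terms on the right-hand side can be controlled by $C h^{\alpha/2} g_c(t,y-x)$ (up to the announced logarithmic factors), the first one producing a Gronwall-type feedback. More precisely, I would introduce the quantity
$$\Phi_h(t):=\sup_{x,y\in\R^d}\frac{|\Gamma^h(0,x,t,y)-\Gamma(0,x,t,y)|}{g_c(t,y-x)}$$
(for some suitably chosen $c>1$) and bound it by a function that will close via a Gr\"onwall-Volterra argument. The first term in \eqref{THIRD_EXP_ERR} is handled as follows: using H\"older's inequality in $z$ with conjugate exponents $(\rho,\rho')$, together with the Gaussian estimate of $\Gamma^h$ from Proposition \ref{EST_DENS_SCHEME} and standard pointwise bounds on $|\nabla_y g_1(T-s,\cdot)|$, the $z$-integral factors into $\|b(s,\cdot)\|_{L^\rho}$ times an explicit convolution of Gaussian densities of the form $(T-s)^{-\frac 12}(s/(T-s))^{-\frac{d}{2\rho}} g_{c'}(T,y-x)$. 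Integrating in time with H\"older at conjugate $(q,q')$ and invoking the strict Ladyzhenskaya-Prodi-Serrin type condition \eqref{COND_KR} (i.e. $\alpha>0$) gives an inequality of Gr\"onwall-Volterra type for $\Phi_h$.

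For the second (cutoff) term $\Delta^2$, I would use $|b-b_h|\le |b|\I_{\{|b|>M\}}$ with $M=Bh^{-(1/q+d/(2\rho))}$ and the elementary estimate $|b|\I_{\{|b|>M\}}\le M^{1-r}|b|^r$ valid for any $r\ge 1$, then optimize in the exponent so that the $L^q-L^\rho$ norm of $b$ appears. The precise scaling of the cutoff threshold is exactly designed so that $h\cdot M^{-(1-\alpha)}=h^{\alpha/2}$ (up to constants), which yields the announced rate. The same type of H\"older/Gaussian bounds used for the Gr\"onwall term then give the desired Gaussian upper bound on $\Delta^2$, with the understanding that a first-time-step separation is required for the scheme \eqref{euler}: on $[0,h]$ the density $\Gamma^h(0,x,s,z)$ collapses to $g_1(s,z-x)$ centered at the deterministic point $x$, and the stronger cutoff is precisely what allows one to still absorb this contribution into $h^{\alpha/2} g_c(t,y-x)$.

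The third term $\Delta^3$ is controlled through the forward-time H\"older modulus of $\Gamma^h$ furnished by Proposition \ref{EST_DENS_SCHEME}: $|\Gamma^h(0,x,s,z)-\Gamma^h(0,x,\tau^h_s,z)|\lesssim (s-\tau^h_s)^{\alpha/2} g_{c'}(s,z-x)$ up to constants. This gives the expected $h^{\alpha/2}$ after integrating against $|b_h(s,z)||\nabla_y g_1(T-s,y-z)|$ via H\"older in $(q,\rho)$. The fourth randomization term $\Delta^5$ exploits the crucial cancellation
$$\E[b_h(U_k,X^h_{t_k})\nabla_y g_1(T-U_k,y-X^h_{t_k})]=\frac 1h\int_{t_k}^{t_{k+1}}\E[b_h(s,X^h_{t_k})\nabla_y g_1(T-s,y-X^h_{t_k})]ds,$$
reducing the analysis to the difference $\nabla_y g_1(T-s,y-X^h_{\tau^h_s})-\nabla_y g_1(T-s,y-X^h_s)$, which by a standard interpolation against the Brownian-like increment $X^h_s-X^h_{\tau^h_s}$ gains the extra $\sqrt{h}$ needed after H\"older in space-time.

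Closing the argument, the Gr\"onwall feedback on $\Phi_h$ takes the form $\Phi_h(t)\lesssim h^{\alpha/2}(1+\textrm{log correction})+\int_0^t K(t,s)\Phi_h(s)ds$ with $K(t,s)$ a Volterra-type kernel integrable in $s$ thanks to $\frac{d}{\rho}+\frac{2}{q}<1$; iterating this inequality (using Beta-function identities) yields $\Phi_h(t)\le C h^{\alpha/2}(1+\textrm{log corrections})$. The logarithmic factor at $\alpha=1$ comes from the limiting case where the space H\"older integrand is $s^{-1}$ on $(0,t)$, integrating into $\ln n$; the additional logarithmic factor at $(d,q)=(1,\infty)$ for the scheme \eqref{euler} arises from the first-time-step contribution in $\Delta^2$, where the Dirac initial condition forces a borderline integrability after summing the small-time singularity of the one-dimensional Gaussian kernel against the bounded drift. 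The main obstacle is the careful management of all these Gaussian convolutions so that each estimate is in terms of one fixed family $g_c$, which requires upgrading the constant $c$ at each step; a subsidiary obstacle is ensuring that the Volterra kernel in the final Gr\"onwall is integrable up to the critical exponent $\alpha=0$, handled through a Picard iteration in a Besov-type norm controlled by Gaussian weights.
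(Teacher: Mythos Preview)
Your overall strategy matches the paper's: the same four-term decomposition \eqref{THIRD_EXP_ERR}, Gaussian/H\"older bounds on the non-feedback terms, and a Gronwall--Volterra closure on $\Phi_h$. A few points, however, are either missing or misidentified and would cause trouble when you write the details out.

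First, you only mention separating the \emph{first} time step, but the integrals in the third and fourth terms of \eqref{THIRD_EXP_ERR} also carry a singularity at $s=t$ through $\nabla_y g_1(t-s,\cdot)$; the paper therefore further isolates the last one or two time steps (its $\Delta^6_t$), and without this the H\"older-in-time estimate \eqref{estischemtemps} applied on $[t_{\ell},t]$ with $t_\ell$ close to $t$ does not close. Second, your description of the randomization term omits the time difference: one must control both $\nabla_y g_1(t-U_{\lfloor s/h\rfloor},y-X^h_{\tau^h_s})-\nabla_y g_1(t-s,y-X^h_{\tau^h_s})$ and $\nabla_y g_1(t-s,y-X^h_{\tau^h_s})-\nabla_y g_1(t-s,y-X^h_s)$, and the paper in fact picks up the borderline $\alpha=1$ logarithm here (as well as in \eqref{estischemtemps}), not from a ``space H\"older integrand $s^{-1}$''. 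Third, and most concretely, your explanation of the extra logarithm at $(d,q)=(1,\infty)$ is wrong: it does \emph{not} come from the first-time-step Dirac contribution. It arises in the cutoff error because, with the threshold $Bh^{-(1/q+d/(2\rho))}$, one writes $|b-b_h|\le h^{\tilde\alpha/2}|b|^{1+\check\alpha}/B^{\check\alpha}$ with $\check\alpha=\tilde\alpha/(2/q+d/\rho)$, and the spatial H\"older inequality requires $\rho/(1+\check\alpha)\ge 1$, i.e.\ $\tilde\alpha\le\alpha+d-1+2\rho/q$; when $d=1$ and $q=\infty$ this forces $\tilde\alpha=\alpha$ and produces a logarithmic time integral. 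For the scheme $\bar X^h$ the cutoff is at $Bh^{-1/2}$, so $\check\alpha=\tilde\alpha$ and the constraint becomes $\tilde\alpha\le\rho-1$, which is slack since $\rho\ge 2$; this, not any first-step effect, is why the logarithm disappears for $\bar\Gamma^h$. Your scaling identity ``$h\cdot M^{-(1-\alpha)}=h^{\alpha/2}$'' is also not the right bookkeeping here.
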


%

  \begin{remark}[About the positive exponent for the time in the error]
 We point  out that, since we are handling rough drifts, and therefore cannot proceed with the expansions of the heat-kernels beyond orders greater than 2, there is no time singularity appearing in the final bound for the error.
We can refer to  \cite{kona:mamm:02}, \cite{guyo:06} or more recently \cite{gobe:laba:08} for a specific description of the time-singularity for  the error expansion when the coefficients are smooth. In that case, the convergence rate is $h$ and the best upper bound for the time singularity comes from \cite{gobe:laba:08} and has order $t^{-1/2} $.

 \end{remark}

 \begin{remark}[About the convergence rate for smoother coefficients]Let us mention that the proof suggests that for drifts H\"older continuous with exponent $\beta$ with respect to the spatial variable and with exponent $\frac\beta 2$ with respect to the time variable, the convergence rate of the Euler approximation for an additive noise should be in $h^{\frac 12+\frac \beta 2}$ if $\beta\in (0,1) $ and not in $h^{\frac \beta 2} $ as established  in e.g. \cite{MikP}, \cite{MenoKona} in which a multiplicative noise was anyhow also taken into consideration. We plan to investigate this question in a future work. We can mention the recent work by Dareiotis \textit{et al.} \cite{dare:gere:le:21}, which investigates the strong error for the Euler scheme with rough drifts, namely quantities of the form $\E[\sup_{s\in [0,T]}|X_s-X_s^h|^p]^{\frac 1p} $, and in which are also obtained error bounds of this order (see Theorem 1.5 therein which precisely gives this convergence rate for a bounded drift in the homogeneous Sobolev-Slobodeckij space $\dot W_m^\alpha $, $m\ge \max(d,2) $, $\alpha\in (0,1) $ and an additive Brownian noise). 
 \end{remark}
\begin{remark}[About other driving noises] Another natural extension would concern the class of noises considered.
One might wonder e.g. if we could consider dynamics of the form
\begin{equation}\label{STAB_DRIVEN_SDE}
 dX_t=b(t,X_t)dt+dZ_t,
 \end{equation}
with $Z $ being a rotationally invariant stable process of index $\gamma\in (1,2) $. We think that the techniques used to prove Theorem \ref{THM_HK_CONT} below, adapting somehow the strategy of \cite{meno:pesc:zhan:21}, should be sufficiently robust to obtain an error estimation with order $
h^{\frac{{\alpha}} \gamma}$ for ${\alpha=1-\frac{d}{(\gamma-1)\rho}-\frac \gamma {(\gamma-1)q}}$ when $b\in L^q-L^\rho$ with $\frac{d}{(\gamma-1)\rho}+\frac \gamma {(\gamma-1)q}<1$. Let us mention that the well-posedness in a strong sense and in a weak sense of \eqref{STAB_DRIVEN_SDE} with singular drift coefficient has respectively been addressed in \cite{zhan:13} and \cite{chau:meno:19} and that heat-kernel bounds for multiplicative stable noise and unbounded drift  have been obtained in \cite{meno:zhan:20} even in the super-critical case $\gamma\in (0,1) $ through the approach of \cite{meno:pesc:zhan:21}. The indicated thresholds can be derived following the procedure below adapting Lemma \ref{GAUSS_INT_LEMMA} to the pure jump stable case.
\end{remark}



   The article is organized as follows. We will first prove our main result in Section \ref{SEC_M_PROOF}.
   To this end, we also state therein two propositions giving Gaussian heat kernel estimates and Duhamel representations for the transition densities of the Euler scheme and the SDE \eqref{eds}. Section \ref{SEC_EUL_CTR} is dedicated to the proof of the estimates for the approximation scheme. In section \ref{Sec-Diff}, we deduce the estimates for the diffusion by letting the time-step $h\to 0$. 
Technical results are gathered in the Appendix. 


   We denote from now on by $C$ a generic constant that may change from line to line and might depend on $b,q,\rho,d,T $. Other possible dependencies will be explicitly specified. We reserve the notation $c>1$ for the concentration constant, or \textit{variance}, in the Gaussian kernels $g_c$.  For a multi-index $\zeta\in \N^d,\ x\in \R^d$, we denote  $\nabla_x^\zeta:=\partial_{x_1}^{\zeta_1}\cdots \partial_{x_d}^{\zeta_d}$. If $|\zeta|:=\sum_{i=1}^d\zeta_i =0 $, $\nabla_x^\zeta $ then simply means that there is no differentiation. Also,  for $a,b>0$, $B(a,b)=\int_0^1u^{a-1}(1-u)^{b-1}du$ stands for the $\beta $-function.

Eventually, we will  consider from now on that the condition \eqref{COND_KR} is met.

\section{Proof of the convergence rate for the error}\label{SEC_M_PROOF}
We prove in this section our main result, Theorem \ref{MTHM}. To this end, we first give two auxiliary results about density/heat kernel estimates for both the scheme and the diffusion.
 
\subsection{Key results for the proof of  Theorem \ref{MTHM}}


Using an approach inspired from \cite{meno:pesc:zhan:21}, we obtain the following estimations for the scheme.
\begin{prop}[Density estimates for the Euler scheme]\label{EST_DENS_SCHEME}
 Assume \eqref{COND_KR}. Set $h=\frac{T}{n},\ n\in \mathbb N^*$. Then the Euler scheme $X^h$ with dynamics \eqref{eulerdiff} 
 (resp. $\bar X^h$ with dynamics \eqref{euler_BIS})
 admits for all $0\le t_k:=kh<t\le T ,\ k\in  \leftB 0,n\rightB, (x,y)\in (\R^d)^2$ a transition density $\Gamma^h(t_k,x,t,y) $ (resp. $\bar\Gamma^h(t_k,x,t,y)$) which enjoys the following Duhamel representation :
 \begin{align}
 \Gamma^h(t_k,x,t,y)
  =g_1(t-t_k,y-x)\textcolor{blue}{-}\int_{t_k}^{ t}\E\left[b_h(U_{\lfloor \frac rh\rfloor },X^h_{\tau_r^h})\cdot\nabla_y g_1(t-r,y-X^h_r)\right]dr,\label{DUHAMEL_SCHEME_PROP}\\
  \mbox{resp. } \bar\Gamma^h(t_k,x,t,y)
  =g_1(t-t_k,y-x)\textcolor{blue}{-}\int_{t_k}^{ t}\E\left[\bar b_h(U_{\lfloor \frac rh\rfloor },\bar X^h_{\tau_r^h})\cdot\nabla_y g_1(t-r,y-\bar X^h_r)\right]dr.\end{align} 
 Furthermore, for each $c>1$, there exists a finite constant $C$ not depending on $h=\frac{T}{n}$ such that for all $k\in\leftB 0,n-1\rightB,
  t\in (t_k,T], x,y,y'\in\R^d$,
\begin{align}
  &\Gamma^h(t_k,x,t,y)\le C g_c(t-t_k,y-x)\label{estigausschem}
\end{align}
and  if $\alpha<1$,
\begin{align}
 \hspace*{4.2cm}   &
    |\Gamma^h(t_k,x,t,y')-\Gamma^h(t_k,x,t,y)|\notag\\
    \le& 
    C \frac{|y-y'|^{\alpha}\wedge (t-t_k)^{\frac{\alpha} 2}}{(t-t_k)^{\frac{\alpha} 2}}\big(g_c(t-t_k,y-x)+g_c(t-t_k,y'-x)\big).\label{estischemspace}
  \end{align}
Also, for all $0\le k<\ell<n,\; t\in[t_\ell,t_{\ell+1}], x,y\in\R^d$,
  \begin{align}
\hspace*{3.4cm}  &|\Gamma^h(t_k,x,t,y)-\Gamma^h(t_k,x,t_\ell,y)|\notag\\
  \le& C \frac{(t-t_\ell)^{\frac{\alpha} 2}}{(t_\ell-t_k)^{\frac {\alpha} 2}}\left(1+\I_{\{\alpha=1\}}\ln\left(\frac{t_\ell-t_k}{h}\right)\right)g_c(t-t_k,y-x)\label{estischemtemps},
\end{align}
and the same estimations hold with $\bar\Gamma^h$ replacing $\Gamma^h$. 
\end{prop}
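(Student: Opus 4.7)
The plan follows the parametrix-type approach of \cite{meno:pesc:zhan:21}, the key structural observation being that, by \eqref{cutoffb} and \eqref{COND_KR},
\begin{equation*}
h|b_h(s,z)|^2\le B^2 h^{1-\frac{2}{q}-\frac{d}{\rho}}=B^2 h^{\alpha}
\end{equation*}
is uniformly bounded. The drift contribution on each time step is therefore subdominant with respect to the Brownian scale, which makes the one-step density a shifted Gaussian dominated by $Cg_c$ and permits Gaussian upper bounds to survive a Gronwall-type iteration. Existence of $\Gamma^h$ follows from the explicit shifted-Gaussian form of the one-step conditional law of $X^h_r$ given $(X^h_{t_k},U_k)$ combined with convolution across time-steps. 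The Duhamel identity \eqref{DUHAMEL_SCHEME_PROP} is then obtained by applying It\^o's formula to $r\mapsto g_1(t-r,y-X^h_r)$ on $[t_k,t-\varepsilon]$: the heat equation $\partial_r g_1(t-r,\cdot)=-\tfrac12\Delta g_1(t-r,\cdot)$ cancels the It\^o correction, the Brownian martingale vanishes under expectation, and one passes to the limit $\varepsilon\downarrow 0$ using smoothness and $L^1$-boundedness of the one-step density.

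For the Gaussian upper bound \eqref{estigausschem} I would iterate the Duhamel identity. At the one-step level the inequality $h|b_h|^2\le B^2 h^{\alpha}$ yields, for $r\in(t_k,t_{k+1}]$, $g_1(r-t_k,y-x-(r-t_k)b_h(u,x))\le Cg_c(r-t_k,y-x)$ uniformly in $u,x,y$; across steps one propagates via the convolution identity $\int g_c(s,z-x)g_c(u,y-z)\,dz=g_c(s+u,y-x)$. The drift integral in \eqref{DUHAMEL_SCHEME_PROP} is handled by disintegrating against the already-bounded kernel $\Gamma^h(t_k,x,\cdot,\cdot)$ and applying H\"older's inequality in time-space with exponents $(q,\rho)$ against $\|b\|_{L^q-L^\rho}$, exploiting the slack $\alpha>0$ so that the resulting Volterra-type inequality has a bounded iterated kernel.

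For the spatial H\"older estimate \eqref{estischemspace} I would difference the Duhamel identity at $y$ and $y'$ and invoke the standard gradient H\"older bound
\begin{equation*}
|\nabla_y g_1(s,y'-z)-\nabla_y g_1(s,y-z)|\le C\frac{|y-y'|^{\alpha}\wedge s^{\alpha/2}}{s^{(1+\alpha)/2}}\bigl(g_c(s,y-z)+g_c(s,y'-z)\bigr),
\end{equation*}
combined with $\Gamma^h\le Cg_c$ and a further H\"older step in $(q,\rho)$. For the time regularity \eqref{estischemtemps} I would split
\begin{align*}
\Gamma^h(t_k,x,t,y)-\Gamma^h(t_k,x,t_\ell,y) &= \bigl[g_1(t-t_k,y-x)-g_1(t_\ell-t_k,y-x)\bigr] \\
&\quad -\int_{t_k}^{t_\ell}\E\bigl[b_h\cdot\bigl(\nabla g_1(t-r,y-X^h_r)-\nabla g_1(t_\ell-r,y-X^h_r)\bigr)\bigr]\,dr \\
&\quad -\int_{t_\ell}^{t}\E\bigl[b_h\cdot\nabla g_1(t-r,y-X^h_r)\bigr]\,dr,
\end{align*}
control the first bracket via $\partial_s g_1=\tfrac12\Delta g_1$, the second by the time H\"older modulus of $\nabla g_1$ paired with H\"older-in-$(q,\rho)$ estimation, and the third directly. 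In the borderline case $\alpha=1$ the time integral in the second term behaves like $\int_{t_k}^{t_\ell}dr/(t_\ell-r)\sim\ln((t_\ell-t_k)/h)$, with the lower cutoff at $h$ coming from the discrete time-step structure, producing the stated logarithmic correction.

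\textbf{Main obstacle.} The delicate step is the parametrix iteration for \eqref{estigausschem}: the drift integral combines the time singularity $(t-r)^{-1/2}$ from $\nabla g_1$ with both $L^\rho$-spatial and $L^q$-temporal integrations of $b$, and one must use H\"older's inequality with a sharp exploitation of the slack $\alpha>0$ in \eqref{COND_KR} to keep the iterated kernel bounded. Once \eqref{estigausschem} is established, \eqref{estischemspace}--\eqref{estischemtemps} follow by comparatively standard manipulations. The analogous statements for $\bar X^h$ are obtained identically, since the cutoff \eqref{cutoffb_BIS} satisfies the weaker pointwise inequality $h|\bar b_h|^2\le B^2$ on time steps with $r\ge h$, while $\bar b_h\equiv 0$ on the first step so no estimate is required there.
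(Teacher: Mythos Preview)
Your overall architecture matches the paper's: one-step shifted Gaussians, Duhamel via It\^o's formula, a Gronwall/iteration for \eqref{estigausschem}, and differencing the Duhamel representation for the H\"older estimates. Two points, however, are genuinely underspecified and one of them hides a real obstacle.

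\emph{First, the spatial H\"older estimate is not ``comparatively standard''.} If you plug your gradient bound with exponent~$\alpha$ into the Duhamel difference and then apply H\"older in $(q,\rho)$, the time integrand picks up a singularity $(t-r)^{-\frac{1+\alpha}{2}-\frac{d}{2\rho}}$, and after the time H\"older with conjugate exponent~$\bar q$ the power is $\bar q\bigl(\tfrac{1+\alpha}{2}+\tfrac{d}{2\rho}\bigr)=\bar q\bigl(1-\tfrac1q\bigr)=1$, i.e.\ the integral is exactly borderline divergent. The paper resolves this by first disposing of the off-diagonal regime $|y-y'|^2>\tfrac14(t-t_k)$ trivially, and in the diagonal regime splitting the time integral at $u=t-|y-y'|^2$: on $[t_{k+1},\tau^h_u]$ one uses the gradient H\"older bound with a \emph{strictly larger} exponent $\tilde\alpha\in(\alpha,1]$ (permissible since $\alpha<1$), so that the integral converges and the surplus factor $(t-t_\ell)^{(\alpha-\tilde\alpha)/2}$ is absorbed by $|y-y'|^{\tilde\alpha-\alpha}$ because $t-t_\ell\ge|y-y'|^2$; on $[\tau^h_u,\tau^h_t-h]$ one abandons the difference and bounds each gradient separately by $(t-r)^{-1/2}$, the shortness of the interval (length $\lesssim|y-y'|^2$) then producing the $|y-y'|^\alpha$ factor. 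Your formulation suppresses this manoeuvre; without it the argument does not close.

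\emph{Second, the first time step needs separate treatment everywhere, not only for the Gaussian upper bound.} In all three estimates the drift in \eqref{DUHAMEL_SCHEME_PROP} is $b_h(U_{\lfloor r/h\rfloor},X^h_{\tau^h_r})$, and for $r\in[t_k,t_{k+1})$ one has $X^h_{\tau^h_r}=x$ deterministic, so there is no density against which to H\"older-in-space. The paper isolates this step in each proof and controls it using the pointwise cutoff bound $|b_h|\le Bh^{-(\frac1q+\frac{d}{2\rho})}$ together with $t-t_k\ge h$ (or the shortness of the step); this is why the expanded form \eqref{duhamgauschem} of the Duhamel identity, with the $j=k$ term singled out, is used systematically. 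The same separation is what produces the $h$ lower cutoff in your logarithm for \eqref{estischemtemps}: it comes not from ``the discrete time-step structure'' in the abstract but concretely from stopping the middle integral at $t_{\ell-1}$ and handling the last interval $[t_{\ell-1},t_\ell]$ by the cutoff bound.
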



\begin{remark}Suppose that $b\in L^\infty-L^\infty$ which corresponds to $\alpha=1$. Then $b$ also belongs to $L^q-L^\infty$ for each $q\in(2,+\infty)$ and the corresponding cutoff does not play any role if $B\ge \|b\|_{L^\infty-L^\infty}T^{\frac{1}{q}}$. As a consequence, \eqref{estischemspace} holds with $\alpha$ replaced by $\check \alpha\in (0,1)$ in the right-hand side with a multiplicative constant $C$ possibly depending on $\check \alpha$. The same is of course true for $\bar\Gamma^h$.
\end{remark}
In the limit $h\to 0$, we will deduce the following proposition.

\begin{prop}\label{THM_HK_CONT}Assume \eqref{COND_KR}. Let $(X_t)_{t\in [0,T]}$ denote the solution to the SDE \eqref{sdediff}.
  Then for each $t\in (0,T]$, $X_t$ admits a density $y\rightarrow \Gamma(0,x,t,y) $ with respect to the Lebesgue measure such that for each $c>1$, there exists $C<+\infty$ such that for all $t\in (0,T],\; x,y,y'\in \R^d $,
\begin{align}
\label{HK_AND_GRAD_SPACE_DENS}
|\Gamma(0,x,t,y)|\le C{ g}_c(t,y-x)
\end{align}
 and 
if $\alpha<1$,
\begin{align}
\;|\Gamma(0,x,t,y)-\Gamma(0,x,t,y')|
\le C \frac{|y-y'|^{\alpha}\wedge(t-s)^{\frac{\alpha}2}}{(t-s)^{\frac{\alpha}2}}
                                                                                                                         \Big({ g}_c(t,y-x)+{ g}_c(t,y'-x)\Big)\label{HKholdspace}.\end{align}
                                                                                                                         
This density enjoys the following Duhamel representation : for all $t\in (0,T]$, $(x,y)\in (\R^d)^2 $:
\begin{align}
 \Gamma(0,x,t,y)
  =g_1(t,y-x)\textcolor{blue}{-}\int_{0}^{ t}\E\left[b(r,X_{r})\cdot\nabla_y g_1(t-r,y-X_r)\right]dr.\label{DUHAMEL_DIFF_PROP}
  \end{align}
\end{prop}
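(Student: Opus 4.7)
The plan is to pass to the limit $h\downarrow 0$ in the density estimates and Duhamel formula of Proposition \ref{EST_DENS_SCHEME}. Fix $t\in(0,T]$ and $x\in\R^d$. The uniform-in-$h$ Gaussian majoration \eqref{estigausschem} and spatial Hölder estimate \eqref{estischemspace} make the family $\{\Gamma^h(0,x,t,\cdot)\}_{h>0}$ uniformly bounded and equicontinuous on compacts of $\R^d$. By Arzelà-Ascoli together with a diagonal extraction over a countable dense subset of $t\in(0,T]$, there is a subsequence $h_k\downarrow 0$ along which $\Gamma^{h_k}(0,x,t,\cdot)$ converges locally uniformly to some continuous $\widetilde\Gamma(t,\cdot)\ge 0$; the common Gaussian dominant upgrades this to $L^1(\R^d)$ convergence and transfers \eqref{estigausschem}--\eqref{estischemspace} to $\widetilde\Gamma$.

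To identify $\widetilde\Gamma(t,\cdot)$ with the density of the Krylov-Röckner solution $X_t$ of \eqref{sdediff}, I would show that $X^{h_k}\to X$ in law on $C([0,T],\R^d)$. Tightness follows from the Gaussian marginal bounds together with the Krylov-type estimate
\begin{align*}
\sup_h\,\E\!\int_s^u\!|b_h(U_{\lfloor r/h\rfloor},X^h_{\tau^h_r})|\,dr\;\le\; C\!\int_s^u\!\!\int_{\R^d}\!|b(r,z)|\,g_c(r,z-x)\,dz\,dr,
\end{align*}
which is finite under \eqref{COND_KR} by Hölder's inequality in $(r,z)$ with exponents dual to $(q,\rho)$. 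Any limit point $\tilde X$ solves the Stroock-Varadhan martingale problem for $\tfrac 12\Delta+b\cdot\nabla$: the drift $\int_0^\cdot b_h(U_{\lfloor r/h\rfloor},X^h_{\tau^h_r})\,dr$ converges to $\int_0^\cdot b(r,\tilde X_r)\,dr$ thanks to $b_h\to b$ in $L^q-L^\rho$, the uniform Gaussian marginal bound on $X^h$, and the joint convergence $(U_{\lfloor r/h\rfloor},X^h_{\tau^h_r})\to (r,\tilde X_r)$. The weak uniqueness implied by the strong uniqueness of \cite{kryl:rock:05} then forces $\tilde X\stackrel{d}{=} X$. As every subsequence admits a sub-subsequence converging to the same limit, the full family $\Gamma^h(0,x,t,\cdot)\to \Gamma(0,x,t,\cdot)$, which inherits \eqref{HK_AND_GRAD_SPACE_DENS} and \eqref{HKholdspace}.

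For the Duhamel representation \eqref{DUHAMEL_DIFF_PROP}, I pass to the limit in \eqref{DUHAMEL_SCHEME_PROP}. Using the tower property with the density $\Gamma^h(0,x,\tau^h_r,\cdot)$ of $X^h_{\tau^h_r}$ and the Gaussian convolution giving the conditional law of $X^h_r$ given $(X^h_{\tau^h_r},U_{\lfloor r/h\rfloor})$, the integrand of \eqref{DUHAMEL_SCHEME_PROP} is dominated uniformly in $h$ by
\begin{align*}
\int_{\R^d}\!|b(r,z)|\,|\nabla_y g_1(t-r,y-z)|\,g_{c'}(r,z-x)\,dz,
\end{align*}
up to a cutoff perturbation controlled by $Bh^{1-(\frac 1q+\frac d{2\rho})}$, and is integrable in $r\in[0,t]$ under \eqref{COND_KR}. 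Dominated convergence combined with $b_h\to b$ and the joint convergences $U_{\lfloor r/h\rfloor}\to r$, $X^h_{\tau^h_r}\to X_r$, $X^h_r\to X_r$ yields \eqref{DUHAMEL_DIFF_PROP}. The principal obstacle is to reconcile the time-randomization $U_{\lfloor r/h\rfloor}$ and the spatial/temporal discretization $X^h_{\tau^h_r}$ of the merely $L^q-L^\rho$ drift when passing to the limit; this is made tractable only through the uniform Gaussian controls of Proposition \ref{EST_DENS_SCHEME} and the subcriticality assumption \eqref{COND_KR}.
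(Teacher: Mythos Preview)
Your overall strategy coincides with the paper's: take $h\to 0$ in the Euler-scheme estimates, extract a weak limit of the laws of $X^h$ on $C([0,T],\R^d)$, identify the limit via the martingale problem and Krylov--R\"ockner uniqueness, and read off the density bounds from Arzel\`a--Ascoli. Two points deserve attention.

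\textbf{The martingale-problem step hides the real work.} You correctly name the principal obstacle but do not supply the tool that resolves it. The functional $\xi\mapsto\int_u^t b(s,\xi_s)\cdot\nabla\varphi(\xi_s)\,ds$ is \emph{not} continuous on $C([0,T],\R^d)$ when $b$ is merely $L^q-L^\rho$, so weak convergence of $X^h$ does not by itself give convergence of the drift term; the phrase ``joint convergence $(U_{\lfloor r/h\rfloor},X^h_{\tau^h_r})\to(r,\tilde X_r)$'' is not a usable statement here. The paper closes this gap by introducing a smooth bounded approximation $b_\varepsilon$ of $b$ with $\|b_\varepsilon^K-b^K\|_{L^{\tilde q}-L^{\tilde\rho}}\to 0$ on compacts. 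The corresponding functional $F_\varepsilon$ \emph{is} continuous, so $\int F_\varepsilon\,dP=\lim_h\E[F_\varepsilon(X^h)]$; one then controls $|F-F_\varepsilon|$ uniformly in $h$ using the Gaussian density bound \eqref{estigausschem} and H\"older's inequality. Separately, the paper shows $\lim_h\E[F(X^h)]=0$ by a four-term decomposition $\Delta_1+\Delta_2+\Delta_3+\Delta_4$ that disentangles (i) spatial increment $X^h_s-X^h_{\tau^h_s}$, (ii) time randomization $U_{\lfloor s/h\rfloor}$ versus $s$, (iii) cutoff error $b_h-b$, and (iv) the forward-time H\"older regularity \eqref{estischemtemps}. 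Your sketch needs at least the $b_\varepsilon$ device to become a proof.

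\textbf{Duhamel.} You propose to pass to the limit in \eqref{DUHAMEL_SCHEME_PROP}; this is feasible but again requires handling the singular drift inside the expectation. The paper instead rederives \eqref{DUHAMEL_DIFF_PROP} directly for the diffusion by the same It\^o-formula argument used for the scheme, once the Gaussian bound \eqref{HK_AND_GRAD_SPACE_DENS} is in hand to guarantee integrability of $\E[|b(r,X_r)\cdot\nabla_y g_1(t-r,y-X_r)|]$ via Lemma~\ref{GAUSS_INT_LEMMA}. That route is shorter.

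A minor point: for tightness in $C([0,T],\R^d)$ the paper uses the $L^2$ bound $\E\int_0^T|b_h(U_{\lfloor s/h\rfloor},X^h_{\tau^h_s})|^2ds\le C$, which via Cauchy--Schwarz gives an almost-sure modulus of continuity for the drift integral; your $L^1$ Krylov-type estimate controls only expectations of increments and would need an extra step.
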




Let us mention that similar Gaussian estimates were obtained for even rougher drifts in Besov spaces with negative regularity index by Perkowski and Van Zuijlen in \cite{perk:vanz:20} using Littlewood-Paley decompositions for the drift. For time-homogeneous drifts, heat kernel estimates of the same type were obtained by Zhang and Zhao \cite{zhan:zhao:17}, see Theorem 5.1 therein, under the condition $\|(I-\Delta)^{-\frac \alpha 2}b\|_{\rho}<+\infty,\ \alpha\in (0,\frac 12), \ \rho\in (\frac{d}{1-\alpha},+\infty) $ through change of probability techniques. 
However, it seems that the $L^q-L^\rho$ case had not been considered so far. Hence, to the best of our knowledge, the above heat-kernel estimates are new and can be of interest independently of the approximation procedure.

\subsection{Proof of the main Theorem for the discretization error}\label{SEC_PROOF_MAIN_RESULT}
From the results of the previous subsection we are almost in position to prove our main result: the error bound of Theorem \eqref{MTHM} for the densities. We state in the next paragraph the additional technical lemmas also needed.  
\subsubsection{Preliminary results}
We state here three technical lemmas that turn out to be useful for the error analysis. The first one is related to integrability properties of Gaussian kernels integrating a function in $L^q-L^\rho $. Such kind of integrals appear from the decomposition of the error \eqref{THIRD_EXP_ERR}. The second one gives standard quantitative bounds for Gaussian kernels. The last one is a Gronwall-Volterra lemma.
Before stating our \textit{Gaussian lemmas}, we recall that for $c,u>0$, $g_c(u,\cdot)$ denotes the centered Gaussian density with covariance matrix $cuI_d$.
\begin{lem}[Singularities induced by an $L^{q'}-L^{\rho'} $ drift in a Gaussian convolution]\label{GAUSS_INT_LEMMA}
Let $\rho',q'\in[1,+\infty]$. It holds that there exists $C:=C(\rho',q',d)$ s.t. for all $0\le s<t\le T $, all $\beta,\gamma\ge 0$,  and all measurable functions $\varphi:[0,t]\times\R^d\to\R$ and $f:[s,t]\rightarrow \R_+$.
\begin{align*}
&I_{\beta,\gamma,f,\varphi}(s,t)\\
:=&\int_{s}^t \frac{du f(u)}{(u-s)^\beta(t-u)^{\gamma}}\int_{\R^d} g_c(u-s,z-x) |\varphi(u,z)| g_c(t-u,y-z)dz\\
\le& C (t-s)^{\frac d{2\rho'}} g_c(t-s,y-x) \Bigg(\int_s^t du\left(  \frac{f(u)}{(u-s)^{\beta+\frac{d}{2\rho'}}(t-u)^{\gamma+\frac{d}{2\rho'}} } \right)^{\bar q'}\Bigg)^{\frac 1{\bar q'}}\|\varphi\|_{L^{q'}-L^{\rho'}},
\end{align*}
and $\frac{1}{q'}+\frac{1}{\bar q'}=1$.
When $f$ is bounded, or in particular when $f$ is constant, the time singularities are integrable provided that $(\beta+\frac{d}{2\rho'})\vee (\gamma+\frac{d}{2\rho'}) <\frac 1{\bar q'}=1-\frac{1}{q'}$. In that case,
\begin{align}
\label{CTR_SING_WITH_CONSTANT}
I_{\beta,\gamma,f,\varphi}(s,t)\le C\|f\|_{L^\infty}\|\varphi\|_{L^{q'}-L^{\rho'}} g_c(t-s,y-x)(t-s)^{1-\frac 1{q'}-(\beta+\gamma+\frac{d}{2\rho'})}\notag\\
\times \left(B\left(1-\bar q'(\beta+\frac{d}{2\rho'}),1-\bar q'(\gamma+\frac{d}{2\rho'})\right)\right)^{\frac 1{\bar q'}}.
\end{align}
\end{lem}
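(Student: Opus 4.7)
The plan is to exploit the Chapman--Kolmogorov factorization of the product of two Gaussian densities, and then apply Hölder's inequality twice: once in the spatial variable and once in the time variable. Concretely, for any $c>0$ and $s<u<t$, there is the well-known identity
\begin{equation*}
g_c(u-s,z-x)\,g_c(t-u,y-z)=g_c(t-s,y-x)\,\tilde g(z),
\end{equation*}
where $z\mapsto \tilde g(z)$ is the Gaussian density in $z$ with mean $x+\tfrac{u-s}{t-s}(y-x)$ and scalar covariance $c\tfrac{(u-s)(t-u)}{t-s}I_d$ (the Brownian bridge transition density). Pulling the factor $g_c(t-s,y-x)$ out of the inner integral reduces the proof to controlling $\int_{\R^d}|\varphi(u,z)|\tilde g(z)\,dz$ uniformly in a manner compatible with the Fubini order of integration.

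Next I would apply Hölder's inequality in the variable $z$ with exponents $\rho'$ and $\bar\rho'=\rho'/(\rho'-1)$, giving
\begin{equation*}
\int_{\R^d}|\varphi(u,z)|\tilde g(z)\,dz \le \|\varphi(u,\cdot)\|_{L^{\rho'}(\R^d)}\,\|\tilde g\|_{L^{\bar\rho'}(\R^d)}.
\end{equation*}
A direct computation on Gaussians shows $\|\tilde g\|_{L^{\bar\rho'}}\le C\bigl(\tfrac{(u-s)(t-u)}{t-s}\bigr)^{-\frac{d}{2\rho'}}$, where I used that $1-1/\bar\rho'=1/\rho'$. Plugging this back yields
\begin{equation*}
I_{\beta,\gamma,f,\varphi}(s,t)\le C(t-s)^{\frac{d}{2\rho'}}g_c(t-s,y-x)\int_{s}^{t}\frac{f(u)\|\varphi(u,\cdot)\|_{L^{\rho'}}}{(u-s)^{\beta+\frac{d}{2\rho'}}(t-u)^{\gamma+\frac{d}{2\rho'}}}\,du.
\end{equation*}

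I would then apply Hölder's inequality in $u$ with exponents $q'$ and $\bar q'=q'/(q'-1)$, factoring out $\|\varphi\|_{L^{q'}-L^{\rho'}}$. This yields exactly the stated general bound. For the second assertion, I would assume $f$ bounded and check that the time singularities are $L^{\bar q'}$-integrable on $[s,t]$; this requires $\bar q'(\beta+\tfrac{d}{2\rho'})<1$ and $\bar q'(\gamma+\tfrac{d}{2\rho'})<1$, which are equivalent to the condition $(\beta+\tfrac{d}{2\rho'})\vee(\gamma+\tfrac{d}{2\rho'})<1-\tfrac{1}{q'}$. Under this condition the change of variable $u=s+v(t-s)$ turns the integral into a Beta function, giving the factor $(t-s)^{\frac{1}{\bar q'}-(\beta+\gamma+\frac{d}{\rho'})}B(1-\bar q'(\beta+\tfrac{d}{2\rho'}),1-\bar q'(\gamma+\tfrac{d}{2\rho'}))^{1/\bar q'}$, which combined with the prefactor $(t-s)^{d/(2\rho')}$ and the identity $1-\tfrac{1}{q'}=\tfrac{1}{\bar q'}$ produces the claimed exponent $1-\tfrac{1}{q'}-(\beta+\gamma+\tfrac{d}{2\rho'})$.

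The computation is essentially bookkeeping once the Chapman--Kolmogorov factorization is in place; the only delicate point is tracking the exponents to confirm that the final $(t-s)$-power and the constraint on $(\beta,\gamma,\rho',q')$ match the statement. The argument handles the endpoint cases $\rho'=\infty$ or $q'=\infty$ by the usual convention that the conjugate Hölder exponents become $1$ and the corresponding norms are taken in $L^\infty$.
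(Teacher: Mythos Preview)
Your proposal is correct and follows essentially the same strategy as the paper: H\"older's inequality in space to extract $\|\varphi(u,\cdot)\|_{L^{\rho'}}$ and produce the singularities $(u-s)^{-d/(2\rho')}(t-u)^{-d/(2\rho')}$, followed by H\"older's inequality in time to extract $\|\varphi\|_{L^{q'}-L^{\rho'}}$. The only cosmetic difference is that you pull out $g_c(t-s,y-x)$ first via the Brownian bridge factorization and then compute the $L^{\bar\rho'}$ norm of a single Gaussian, whereas the paper applies H\"older directly to the product $g_c(u-s,\cdot)g_c(t-u,\cdot)$ and then uses Gaussian convolution on the resulting $g_{c/\bar\rho'}$ densities; both routes yield the same intermediate bound \eqref{BD_G_EPS}.
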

\begin{proof}
Set $G_{s,t}^{x,y}(u):=\int_{\R^d} g_c(u-s,z-x) |\varphi(u,z)| g_c(t-u,y-z)dz $. From the H\"older inequality, we get that there exists a finite constant $C_{\rho'}$ s.t.
\begin{align}
|G_{s,t}^{x,y}(u)|
\le& C_{\rho'}\|\varphi(u,\cdot)\|_{L^{\rho'}}\left( \int_{\R^d} \frac{g_{\frac c{\bar \rho'}}(u-s,x-z)g_{\frac c{\bar \rho'}}(t-u,y-z)}{(u-s)^{(\bar \rho'-1) \frac d2}(t-u)^{(\bar \rho'-1) \frac d2}}dz\right)^{\frac {1}{\bar \rho'}},\ \frac {1}{\rho'}+\frac 1{\bar \rho'}=1\notag\\
\le& C_{\rho'}\frac{\|\varphi(u,\cdot)\|_{L^{\rho'}}}{(u-s)^{\frac{d}{2\rho'}}(t-u)^{\frac{d}{2\rho'}}}\times\frac{1}{(t-s)^{\frac{d}{2\bar \rho'}}}\exp\Big(-\frac{|x-y|^2}{2c(t-s)} \Big)\notag\\
\le &C_{\rho'}\frac{\|\varphi(u,\cdot)\|_{L^{\rho'}}(t-s)^{\frac{d}{2\rho'}}}{(u-s)^{\frac{d}{2\rho'}}(t-u)^{\frac{d}{2\rho'}} } g_c(t-s,x-y), \label{BD_G_EPS}
\end{align}
up to a modification of $C_{\rho'}$ from line to line. From the definition of $ I_{\beta,\gamma,f,\varphi}(s,t)$ we derive the statement from \eqref{BD_G_EPS} and the H\"older inequality (in time). Namely,
\begin{eqnarray*}
&&I_{\beta,\gamma,f,\varphi}(s,t)\\
&\le& C_{\rho'}(t-s)^{\frac{d}{2\rho'}}g_c(t-s,x-y)\int_{s}^t du f(u) \frac{\|\varphi(u,\cdot)\|_{L^{\rho'}}}{(u-s)^{\beta+\frac{d}{2\rho'}}(t-u)^{\gamma+\frac{d}{2\rho'}} } \\
&\le & C (t-s)^{\frac d{2\rho'}} g_c(t-s,y-x) \Bigg(\int_s^t du\left(  \frac{f(u)}{(u-s)^{\beta+\frac{d}{2\rho'}}(t-u)^{\gamma+\frac{d}{2\rho'}} } \right)^{\bar q'}\Bigg)^{\frac 1{\bar q'}}\|\varphi\|_{L^{q'}-L^{\rho'}}.
\end{eqnarray*}
The integrability conditions and the explicit control of \eqref{CTR_SING_WITH_CONSTANT} then readily follow when $f$ is bounded. The proof is complete.
\end{proof}

For the computations to be performed, we will also often need quantitative bounds for sensitivities of Gaussian kernels.
We state the following result the proof of which is \textit{standard} and postponed to Appendix \ref{SEC_PROOF_USUAL_GAUSS_EST} for the sake of completeness.
\begin{lem}[Gaussian Sensitivities]\label{LEM_GAUS_SENS}
For each $c>1$ there exists $C<+\infty$ s.t. for each  multi-index $\zeta$ with length $|\zeta|\le  2 $, and for all $0< u\le u' \le T $, $x,x'\in \R^d $ :
\begin{equation}
\label{CTR_GRAD_GAUSS}
|\nabla_x^\zeta {\tildeGamma}(u,x)|\le \frac{C}{u^{\frac{|\zeta|}2}}g_c(u,x)\mbox{ and } |\partial_u\nabla_x^\zeta {\tildeGamma}(u,x)|\le \frac{C}{u^{1+\frac{|\zeta|}2}}g_c(u,x),
\end{equation}
\begin{align}\label{DIFF_GRAD_GAUSS_LEM}
\Big|\nabla^\zeta _x {\tildeGamma}(u,x)-\nabla^\zeta _x {\tildeGamma}(u,x')\Big|\le C\frac{|x-x'|\wedge u^{\frac{1}{2}}}{u^{\frac{1+|\zeta|}2}}(g_c(u,x)+g_c(u,x')),
\end{align}
\begin{align}\label{DIFF_GRAD_GAUSS_LEM_TIME}
  \Big|\nabla_x^\zeta {\tildeGamma}(u',x)-\nabla_x^\zeta {\tildeGamma}(u,x)\Big|\le C\frac{|u'-u|\wedge u}{u^{1+\frac{|\zeta|}2}}(g_c(u,x)+g_c(u',x)).
\end{align}
\end{lem}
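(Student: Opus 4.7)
The kernel $g_1(u,x)=(2\pi u)^{-d/2}\exp(-|x|^2/(2u))$ is explicit, so the three inequalities all reduce to direct Gaussian calculus. My plan is to fix an auxiliary $c_0\in(1,c)$ with $2c_0\ge c$ (take e.g.\ $c_0=(1+c)/2$), derive every bound first with $g_{c_0}$ on the right-hand side, and then exploit the slack between $c_0$ and $c$ to absorb the polynomial prefactors produced by differentiation and the small space/time perturbations. For the pointwise estimates \eqref{CTR_GRAD_GAUSS}, I would write $\nabla_x^\zeta g_1(u,x)=u^{-|\zeta|/2}H_\zeta(x/\sqrt{u})\,g_1(u,x)$ for a polynomial $H_\zeta$ of degree $|\zeta|$ (essentially a Hermite polynomial) and invoke the elementary absorption $|H_\zeta(y)|\exp(-|y|^2/2)\le C_{c_0,\zeta}\exp(-|y|^2/(2c_0))$, which holds for any $c_0>1$ since the quadratic slack dominates any polynomial. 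This settles the spatial bound at every order; the time derivative then follows from the heat equation $\partial_u g_1=\tfrac12\Delta_x g_1$, whence $\partial_u\nabla_x^\zeta g_1$ is a sum of $\nabla_x^{\zeta+2e_i}g_1$ controlled by the spatial estimate of order $|\zeta|+2$. Although \eqref{CTR_GRAD_GAUSS} is stated for $|\zeta|\le 2$, I would internally record it for every order up to $|\zeta|+1$ as well, which is what \eqref{DIFF_GRAD_GAUSS_LEM} will consume.

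For \eqref{DIFF_GRAD_GAUSS_LEM} I would split according to whether $|x-x'|\ge u^{1/2}$ or not. In the former regime, $|x-x'|\wedge u^{1/2}=u^{1/2}$ and the triangle inequality combined with \eqref{CTR_GRAD_GAUSS} already produces $Cu^{-|\zeta|/2}(g_c(u,x)+g_c(u,x'))$, which matches the target after noting $u^{1/2}/u^{(1+|\zeta|)/2}=u^{-|\zeta|/2}$. In the latter regime, the fundamental theorem of calculus yields
\begin{equation*}
\nabla_x^\zeta g_1(u,x)-\nabla_x^\zeta g_1(u,x')=\sum_{i=1}^d(x_i-x_i')\int_0^1\partial_i\nabla_x^\zeta g_1(u,y_\theta)\,d\theta,\qquad y_\theta:=x'+\theta(x-x'),
\end{equation*}
and I bound the integrand pointwise by $Cu^{-(|\zeta|+1)/2}\,g_{c_0}(u,y_\theta)$ via the (internally extended) \eqref{CTR_GRAD_GAUSS} of order $|\zeta|+1$. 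The only nontrivial step is then the Gaussian comparison $g_{c_0}(u,y_\theta)\le C\,g_c(u,x)$ (and, by symmetry, $\le C\,g_c(u,x')$) on the regime $|x-x'|\le u^{1/2}$; this follows from the Young inequality $|x|^2\le(c/c_0)|y_\theta|^2+C_0|y_\theta-x|^2$ together with $|y_\theta-x|^2\le u$, which yields $\exp(-|y_\theta|^2/(2c_0 u))\le C\exp(-|x|^2/(2cu))$.

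For \eqref{DIFF_GRAD_GAUSS_LEM_TIME} the strategy mirrors the previous step, splitting on $|u'-u|$ versus $u$. When $|u'-u|\ge u$, the triangle inequality at both times combined with \eqref{CTR_GRAD_GAUSS} closes the estimate, using $|u'-u|\wedge u=u$ and $u/u^{1+|\zeta|/2}=u^{-|\zeta|/2}$. When $|u'-u|\le u$, so that $u\le v\le u'\le 2u$, the fundamental theorem of calculus in time gives
\begin{equation*}
\nabla_x^\zeta g_1(u',x)-\nabla_x^\zeta g_1(u,x)=\int_u^{u'}\partial_v\nabla_x^\zeta g_1(v,x)\,dv,
\end{equation*}
and the pointwise bound on the time derivative from \eqref{CTR_GRAD_GAUSS} produces an integrand of size $Cv^{-1-|\zeta|/2}g_{c_0}(v,x)\le Cu^{-1-|\zeta|/2}g_c(u,x)$, where the comparison $g_{c_0}(v,x)\le C\,g_c(u,x)$ uniformly in $v\in[u,2u]$ exploits precisely the choice $c\le 2c_0$. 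Integrating in $v$ recovers the factor $|u'-u|=|u'-u|\wedge u$, yielding the claim. No step constitutes a genuine obstacle; the only thing demanding care is the bookkeeping of the variance constants across the three estimates, so one must pick $c_0$ strictly smaller than $c$ at the outset — which is why the hypothesis $c>1$ is exactly what is needed.
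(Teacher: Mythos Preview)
Your approach is essentially the same as the paper's: the same case split on $|x-x'|$ versus $u^{1/2}$ for \eqref{DIFF_GRAD_GAUSS_LEM} and on $|u'-u|$ versus $u$ for \eqref{DIFF_GRAD_GAUSS_LEM_TIME}, the same use of the fundamental theorem of calculus, and the same Young-type quadratic inequality to pass from the intermediate point $y_\theta$ back to $x$ or $x'$.

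There is, however, a slip in your treatment of the time-difference in the regime $|u'-u|\le u$. You claim $g_{c_0}(v,x)\le C\,g_c(u,x)$ uniformly for $v\in[u,2u]$ ``thanks to $c\le 2c_0$'', but the exponential comparison $\exp(-|x|^2/(2c_0 v))\le\exp(-|x|^2/(2cu))$ requires $c_0 v\le cu$, hence at $v=2u$ the condition $c\ge 2c_0$, which is the \emph{opposite} of what you wrote and is incompatible with $c_0>1$ when $c\le 2$. The fix is immediate: compare with the larger time instead, exactly as the paper does. Since $v\le u'$ one has $\exp(-|x|^2/(2c_0 v))\le\exp(-|x|^2/(2cu'))$ (using only $c_0<c$), and the prefactor ratio is bounded by $(2c/c_0)^{d/2}$; thus $g_{c_0}(v,x)\le C\,g_c(u',x)$, which is one of the two Gaussians allowed on the right-hand side of \eqref{DIFF_GRAD_GAUSS_LEM_TIME}. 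With this correction your argument is complete and matches the paper's.
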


The next lemma, the proof of which is postponed to  Appendix \ref{APP_GR} roughly says that the usual Gronwall inequality extends to integral inequalities involving integrable singularities. 
\begin{lem}[Gronwall-Volterra Lemma]\label{GR_VOL_LEMMA}
 \begin{description}
   \item [(i)] Let $\tilde \beta<1$, $\beta>\tilde\beta-1$ and $\eta,\delta,T>0$. There exists some finite constant $C_{\beta,\tilde \beta,\eta,\delta,T}$ such that $\sup_{t\in[0,T]} f(t)\le C_{\beta,\tilde \beta,\eta,\delta,T}$ for each measurable and bounded function $f:[0,T]\to\R_+$ satisfying
  \begin{equation}
   \forall t\in[0,T],\,f(t)\le \eta+\delta t^{\beta}\int_0^t\frac{f(s)ds}{s^{\tilde \beta}}.\label{sansdeux}
 \end{equation}
\item[(ii)]Let $\tilde \beta,\hat\beta<1$, $\check \beta>\tilde \beta+\hat\beta-1$ and $a,b,T>0$. There exists some finite constant $C_{\tilde \beta,\hat\beta,\check \beta,a,b,T}$ such that $\sup_{t\in[0,T]} f(t)\le C_{\tilde \beta,\hat\beta,\check \beta,a,b,T}$ for each measurable and bounded function $f:[0,T]\to\R_+$ satisfying
  \begin{equation}
   \forall t\in[0,T],\,f(t)\le a+b t^{\check \beta}\int_0^t\frac{f(s)ds}{s^{\tilde \beta}(t-s)^{\hat\beta}}.\label{avecdeux}
  \end{equation}
  \end{description}
  
\end{lem}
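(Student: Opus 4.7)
The plan is to prove (i) by reducing the singular integral inequality to a classical Gronwall-type estimate through a suitable primitive of $f$, and to prove (ii) by iterating the associated Volterra operator and recognizing a Neumann series whose iterates decay factorially thanks to the smoothing effect of the singular kernel.

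\textbf{Part (i).} Since $f$ is bounded on $[0,T]$ and $\tilde\beta<1$, the auxiliary function $u(t):=\int_0^t s^{-\tilde\beta}f(s)\,ds$ is well defined and continuous on $[0,T]$. Inserting the bound $f(s)\le \eta+\delta s^{\beta}u(s)$ given by \eqref{sansdeux} into the definition of $u$ yields
$$u(t)\le \frac{\eta\, t^{1-\tilde\beta}}{1-\tilde\beta}+\delta\int_0^t s^{\beta-\tilde\beta}u(s)\,ds.$$
The assumption $\beta-\tilde\beta>-1$ makes the kernel $s\mapsto \delta s^{\beta-\tilde\beta}$ integrable on $[0,T]$, so the classical Gronwall lemma (with nondecreasing source and nonnegative $L^1$ kernel) delivers a finite constant $C_0=C_0(\beta,\tilde\beta,\delta,T)$ such that $u(t)\le C_0\,\eta\, t^{1-\tilde\beta}$ for every $t\in[0,T]$. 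Plugging this back into \eqref{sansdeux} gives $f(t)\le \eta+\delta C_0\eta\, t^{\beta+1-\tilde\beta}$; because the exponent $\beta+1-\tilde\beta$ is strictly positive, this is uniformly bounded on $[0,T]$ by a quantity depending only on $\beta,\tilde\beta,\eta,\delta,T$, as required.

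\textbf{Part (ii).} Set $\gamma:=\check\beta+1-\tilde\beta-\hat\beta>0$ and introduce the positivity-preserving linear operator
$$(T\varphi)(t):=b\,t^{\check\beta}\int_0^t \frac{\varphi(s)\,ds}{s^{\tilde\beta}(t-s)^{\hat\beta}}.$$
Inequality \eqref{avecdeux} reads $f\le a+Tf$, and iterating gives $f(t)\le a\sum_{k=0}^{n-1}(T^k\mathbf{1})(t)+(T^nf)(t)$ for every $n\ge 1$. Using repeatedly the identity $\int_0^t s^{\alpha-1}(t-s)^{\delta-1}\,ds=t^{\alpha+\delta-1}B(\alpha,\delta)$, valid for $\alpha,\delta>0$, an induction on $k$ produces the closed form
$$(T^k\mathbf{1})(t)=b^k\,t^{k\gamma}\prod_{j=0}^{k-1} B\bigl(j\gamma+1-\tilde\beta,\,1-\hat\beta\bigr).$$
Stirling's asymptotics $B(x,1-\hat\beta)=\Gamma(x)\Gamma(1-\hat\beta)/\Gamma(x+1-\hat\beta)\sim \Gamma(1-\hat\beta)\,x^{-(1-\hat\beta)}$ as $x\to+\infty$ imply $B(j\gamma+1-\tilde\beta,\,1-\hat\beta)\le C\,(j+1)^{-(1-\hat\beta)}$ for some $C=C(\tilde\beta,\hat\beta,\gamma)$. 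Hence the product above is bounded by $C^k/(k!)^{1-\hat\beta}$, and, since $t^{k\gamma}\le T^{k\gamma}$, the Neumann series $E(t):=\sum_{k\ge 0}(T^k\mathbf{1})(t)$ is summable uniformly on $[0,T]$. Moreover $(T^nf)(t)\le \|f\|_\infty (T^n\mathbf{1})(t)\to 0$ as $n\to\infty$, so letting $n\to\infty$ yields $f(t)\le aE(t)\le a\,\sup_{[0,T]}E<+\infty$, a bound that depends only on $\tilde\beta,\hat\beta,\check\beta,a,b,T$ and crucially is independent of $\|f\|_\infty$.

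\textbf{Main obstacle.} The technical heart of the proof lies in part (ii): establishing the explicit iterated formula for $(T^k\mathbf{1})$ and then extracting the factorial decay of the product of Beta factors via Stirling's formula. This is precisely where the two structural assumptions on the exponents come in essentially. The inequality $\check\beta>\tilde\beta+\hat\beta-1$ ensures $\gamma>0$, so that the powers $t^{k\gamma}$ stay uniformly bounded on $[0,T]$; and $\hat\beta<1$ is exactly what guarantees, via Stirling, that the Beta product decays factorially rather than only geometrically, giving absolute convergence of the Neumann series and hence a bound on $f$ that is independent of $\|f\|_\infty$.
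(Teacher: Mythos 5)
Your proof is correct, and both parts take a genuinely different route from the paper's. For part (i), the paper iterates the inequality \eqref{sansdeux} $n$ times directly, computes the resulting nested integrals in closed form (each with value $\delta^k t^{k(1+\beta-\tilde\beta)}/\prod_{j=1}^k(j(1+\beta-\tilde\beta)-\beta)$), dominates the sum by an exponential series, and lets the remainder vanish; you instead set $u(t)=\int_0^t s^{-\tilde\beta}f(s)\,ds$, observe that $u$ satisfies a linear inequality with the integrable, $t$-independent kernel $\delta s^{\beta-\tilde\beta}$, and simply invoke the classical Gronwall lemma (the nondecreasing-source hypothesis holds because $1-\tilde\beta>0$). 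Your version is shorter and avoids computing iterated integrals. For part (ii), the paper proceeds by a two-case ``exponent-halving'' argument: iterating \eqref{avecdeux} once cuts the exponent on $(t-s)$ from $\hat\beta$ to $\hat\beta-\gamma$ (or $2\hat\beta-1$), and repeating this a finite number of times drives that exponent $\le 0$, reducing to case (i); the recursion for the constants $a_n,b_n$ is fairly involved and splits further according to the sign of $\tilde\beta-\check\beta$. You instead work directly with the full Neumann series: the explicit formula $(T^k\mathbf{1})(t)=b^k t^{k\gamma}\prod_{j=0}^{k-1}B(j\gamma+1-\tilde\beta,1-\hat\beta)$ (valid because each first argument is $\ge 1-\tilde\beta>0$ and $1-\hat\beta>0$) plus the Stirling estimate $B(x,1-\hat\beta)\lesssim x^{-(1-\hat\beta)}$ gives factorial decay $\lesssim C^k/(k!)^{1-\hat\beta}$, so the series converges uniformly on $[0,T]$ and $T^n f\to 0$ since $(T^n\mathbf{1})\to 0$ pointwise. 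This treats all sign configurations of $\hat\beta$ and $\tilde\beta-\check\beta$ in one stroke, at the modest cost of invoking Stirling. Both proofs hinge on the same two structural facts — $\gamma=\check\beta+1-\tilde\beta-\hat\beta>0$ keeps the powers of $t$ bounded, and $\hat\beta<1$ keeps the Beta factors well defined and decaying — but yours makes that dependence more transparent.
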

\begin{remark}
  Under the assumptions of {\bf (i)}, $\forall t\in[0,T]$, $f(t)\le \eta+\frac{\delta t^{\beta+1-\tilde\beta}}{1-\tilde\beta}\sup_{s\in[0,t]}f(s)$ so that $\sup_{s\in[0,t]}f(s)\le \eta+\frac{\delta t^{\beta+1-\tilde\beta}}{1-\tilde\beta}\sup_{s\in[0,t]}f(s)$ and when $t<\left(\frac{1-\tilde\beta}{\delta}\right)^{\frac{1}{\beta+1-\tilde\beta}}$, $$\sup_{s\in[0,t]}f(s)\le \frac{\eta(1-\tilde\beta)}{1-\tilde\beta-\delta t^{\beta+1-\tilde\beta}}.$$\\Similarly, under the assumptions of {\bf (ii)}, for $t\in[0,T]$ such that $t<(bB( 1-\tilde\beta,1-\hat\beta))^{-\frac{1}{\check\beta+1-\tilde\beta-\hat\beta}}$, $
\sup_{s\in[0,t]}f(s)\le \frac{a}{1-bB(1-\tilde\beta,1-\hat\beta)t^{\check\beta+1-\tilde\beta-\hat\beta}}$.\end{remark}

\subsubsection{Final derivation of the error bounds}\label{THE_SEC_FOR_PROOF_MTHM}
By \eqref{DUHAMEL_SCHEME_PROP} and \eqref{DUHAMEL_DIFF_PROP}, the discretization error writes
\begin{align*}
  &\Gamma^h(0,x,t,y)-\Gamma(0,x,t,y)\\
  =&\E\left[\int_0^t \Big( b(s, X_{s})\cdot \nabla_y g_1(t-s,y-X_s)-b_h(U_{\lfloor \frac sh\rfloor}, X_{\tau_s^h}^h)\cdot \nabla_y g_1(t-s,y-X_s^h)\Big)ds\right].\end{align*}
For $s\in(t_1,T]\setminus\{t_k:k\in\llbracket2,n-1\rrbracket\}$, $\varphi:\R^d\times\R^d\times\R\to\R$ measurable and bounded, we have using  $X^h_s=X^h_{\tau^h_s}+W_s-W_{\tau^h_s}+b_h(U_{\lfloor \frac s h\rfloor},X^h_{\tau^h_s})(s-\tau^h_s)$ and the independence of $X^h_{\tau^h_s}$, $W_s-W_{\tau^h_s}$ and $U_{\lfloor \frac s h\rfloor}$,
\begin{align*}
 &  \E\left[\varphi(X^h_{\tau^h_s},X^h_s,U_{\lfloor \frac s h\rfloor})\right]\\=&\frac 1 h\int_{\tau^h_s}^{\tau^h_s+h}\int_{\R^d\times\R^d}\varphi(w,z,r)\Gamma^h(0,x,\tau^h_s,w)g_1\left(s-\tau^h_s,z-w-b_h(r,w)(s-\tau^h_s)\right)dzdwdr.
\end{align*}

We deduce that the error decomposes as

\begin{align}
 &\Gamma^h(0,x,t,y)-\Gamma(0,x,t,y)=\Delta^1_t+\Delta^2_t+\Delta^3_t+\Delta^4_t+\Delta^5_t+\Delta^6_t\mbox{ where }\label{decomperr}\\
 &\Delta^1_t=\int_0^t ds  \int_{\R^d}[\Gamma(0,x,s,z)-\Gamma^h(0,x,s,z)] b(s,z) \cdot \nabla_y g_1(t-s,y-z)dz,\notag\\
  &\Delta^2_t=\I_{\{t\ge 3h\}}\int_{t_1}^{\tau_t^{{h}}-h}ds\int_{\R^d}\Gamma^h(0,x,s,z)(b(s,z)-b_h(s,z)).\nabla_y g_1(t-s,y-z)dz,\notag\\&\Delta^3_t=\I_{\{t\ge 3h\}}\int_{t_1}^{\tau_t^{{h}}-h} ds\int_{\R^d}[\Gamma^h(0,x,s,z)-\Gamma^h(0,x,\tau^h_s,z)]b_h(s,z).\nabla_y g_1(t-s,y-z) dz,\notag
  \\ &\Delta^4_t=\I_{\{t\ge 3h\}}\int_{t_1}^{\tau_t^{{h}}-h}ds\E\bigg[b_h(U_{\lfloor s/h\rfloor},X_{\tau^h_s}^h)\cdot (\nabla_y g_1(t-U_{\lfloor s/h\rfloor},y-X_{\tau^h_s}^h)\notag\\
  &\hspace*{2cm}-\nabla_y g_1(t-s,y-X_s^h))\bigg],\notag\\
                &\Delta^5_t=\frac 1 h\int_{s=0}^{t_1\wedge t}\int_{r=0}^{h}\int_{\R^d}g_1(s,z\!-\!x\!-\!b_h(r,x)s)\left(b(s,z)\!-\!b_h(r,x)\right).\nabla_y g_1(t\!-\!s,y\!-\!z)dz drds,\notag\\&\Delta^6_t=\I_{\{t\ge h\}}\frac 1 h\int_{s=(\tau_t^{{h}}-h)\vee t_1}^{t}\int_{r=\tau^h_s}^{\tau^h_s+h}\int_{\R^{2d}}\Gamma^h(0,x,\tau^h_s,w)g_1\left(s\!-\!\tau^h_s,\!z\!-\!w\!-\!b_h(r,w)(s\!-\!\tau^h_s)\right)\notag\\&\phantom{\Delta^5_t=\I_{\{t\ge h\}}\frac 1 h\int_{s=(\tau_t^{{h}}-h)\vee t_1}^{t}}\times\left(b(s,z)-b_h(r,w)\right).\nabla_y g_1(t-s,y-z)dz dwdrds
     \notag.
\end{align}

Let us first deal with the cutoff error term $\Delta^2_t$ when $t\ge 3h$ (the contribution $\Delta^1_t$ will be handled at the end of the proof by a Gronwall type argument). When $\rho=q=\infty$, we recall that we choose $B=\|b\|_{L^\rho-L^q}$ so that $b_h=b$ and $\Delta^2_t=0$. Let us then suppose that $\frac 2q+\frac d{\rho}>0$, choose $\tilde \alpha\in[\alpha,\alpha+\frac 1 2)$ and set $\check \alpha=\frac{\tilde \alpha}{\frac 2q+\frac d{\rho}}$. We have $$|b-b_h|=\left(|b|-B h^{-(\frac 1 q+\frac{d}{2\rho})}\right)^+\le |b|\I_{\{|b|\ge Bh^{-(\frac 1 q+\frac{d}{2\rho})}\}}\le \frac{h^{\frac{\tilde \alpha} 2}|b|^{1+\check \alpha}}{B^{\check \alpha}}. $$Combining this inequality with $\tau^h_t-h\le t-h$, \eqref{CTR_GRAD_GAUSS} and \eqref{estigausschem} then applying H\"older's inequality in space like in Lemma \ref{GAUSS_INT_LEMMA} with $\rho'=\frac{\rho}{1+\check \alpha}$ and last H\"older's inequality in time, 
we obtain
\begin{align*}
  &|\Delta^2_t|\\
  \le &Ch^{\frac{\tilde \alpha} 2}\int_{0}^{t-h} ds\int_{\R^d}g_c(s,z-x)|b(s,z)|^{1+\check \alpha}\frac{g_c(t-s,y-z)}{\sqrt{t-s}}dzds\\\le &Ch^{\frac{\tilde \alpha} 2}g_c(t,y-x)t^{\frac{d(1+\alpha)}{2\rho}}\int_0^{t-h}\frac{\|b(s,.)\|_{L^\rho}^{1+\check \alpha}ds}{s^{\frac{d(1+\alpha)}{2\rho}}(t-s)^{\frac 1 2+\frac{d(1+\alpha)}{2\rho}} }\\\le &Ch^{\frac{\tilde \alpha} 2}g_c(t,y-x)\left(t^{-\frac 1 2}\int_0^{\frac t 2}\frac{\|b(s,.)\|_{L^\rho}^{1+\check \alpha}ds}{s^{\frac{d(1+\alpha)}{2\rho}}}+\int_{\frac t 2}^{t-h}\frac{\|b(s,.)\|_{L^\rho}^{1+\check\alpha}ds}{(t-s)^{\frac 1 2+\frac{d(1+\alpha)}{2\rho}} }\right)\\
  \le &Ch^{\frac{\tilde \alpha} 2}g_c(t,y-x)\|b\|_{L^q-L^\rho}^{1+\check \alpha}\left(t^{-\frac 1 2}\times t^{\frac{1+\alpha-\tilde\alpha}{2}}+\I_{\{\tilde\alpha>\alpha\}}h^{\frac{\alpha-\tilde\alpha}{2}}+\I_{\{\tilde\alpha=\alpha\}}\left(\ln\left(\frac t{2h}\right)\right)^{1-\frac{1+\check \alpha} q}\right).
\end{align*}
For the application of the H\"older inequality in space (resp. in time), we needed that $\frac{\rho}{1+\check\alpha}\ge 1$ (resp. $\frac{q}{1+\check\alpha}>1$) an inequality of course satisfied when $\rho=\infty$ (resp. $q=\infty$) and otherwise equivalent to $\frac{\rho\left(\frac{2}{q}+\frac{d}{\rho}\right)}{\frac{2}{q}+\frac{d}{\rho}+\tilde\alpha}\ge 1$ (resp. $\frac{q\left(\frac{2}{q}+\frac{d}{\rho}\right)}{\frac{2}{q}+\frac{d}{\rho}+\tilde\alpha}> 1$) and thus to $\tilde\alpha\le \alpha+d-1+\frac{2\rho} q$ (resp. $\tilde\alpha< \alpha+1+\frac{dq}{\rho}$ which always holds true). When $d\ge 2$ or $q<\infty$, we may choose $\tilde\alpha\in (\alpha,\alpha+\frac 1 2)$ so that the first requirement is satisfied as well, while when $d=1$ and $q=\infty$,  $\tilde\alpha=\alpha$ is the only possible choice. With the inequality $t\ge 3h$, we conclude that
\begin{equation}
   |\Delta^2_t|\le Ch^{\frac{\alpha} 2}\left(1+\I_{\{d=1,q=\infty,\rho<\infty\}}\ln\left(\frac Th\right)\right)g_c(t,y-x)\label{THE_ALTERNATIVE_DELTA_5_FOR_ERR}.
\end{equation}
For the scheme \eqref{euler_BIS}, note that the first time step when $\bar b_h=0$ does not contribute to the term $\bar\Delta^2_t$ analogous to $\Delta^2_t$ in the error decomposition \eqref{decomperr}. When $\rho=q=\infty$, then the cutoff error vanishes as soon has $h\le\frac{B^2}{\|b\|_{L^\infty-L^\infty}^2}$. When either $\rho<\infty$ or $q<\infty$, we may reproduce the above reasoning with $\check\alpha=\tilde\alpha$ since $|b-\bar b_h|\le \frac{h^{\frac{\tilde \alpha} 2}|b|^{1+\tilde \alpha}}{B^{\tilde \alpha}}$. The requirement for the  H\"older inequality in space writes $\tilde\alpha\le \rho-1$ where, since $\rho\ge 2$ by \eqref{COND_KR}, $\rho-1>\alpha$. Hence the logarithmic term may be removed when $d=1$, $q=\infty$ and $\rho<\infty$ :
\begin{equation*}
   |\bar\Delta^2_t|\le Ch^{\frac{\alpha} 2}g_c(t,y-x).
\end{equation*}

Concerning the estimations of $\Delta^i_t$, $i\in\{3,4,6\}$, the choice of the cutoff does not play any role (we will only use that $|b_h|\le |b|$). That is why we deal before with the estimation of the contribution $\Delta^5_t$ of the first time step where the arguments depend on this choice. The inequality
\begin{equation}
   \forall c'>1,\;\forall x,y,z\in\R^d,\;\left|z-x-y\right|^2\ge \frac{1}{c'}|z-x|^2-\frac{1}{c'-1}|y|^2\label{majodiff}
\end{equation}
applied with $c'=c$ and $y=sb_h(r,x)$ such that $|y|\le \frac{Bs}{h^{\frac 1q+\frac d{2\rho}}}$ by the definition \eqref{cutoffb} of  $b_h$, implies that,
\begin{equation}
 \forall (s,r,x,z)\in(0,h]\times[0,T]\times\R^d\times\R^d,\;g_1(s,z-x-b_h(r,x)s)\le c^{\frac{d}{2}}e^{\frac{B^2h^{\alpha}}{2(c-1)}}g_c(s,z-x).\label{estig}
\end{equation}
When $b_h$ is replaced by $\bar b_h$, the factor $c^{\frac{d}{2}}e^{\frac{B^2h^{\alpha}}{2(c-1)}}$ should be replaced by $c^{\frac{d}{2}}e^{\frac{B^2}{2(c-1)}}$. 

Equation \eqref{estig} is crucial since it precisely emphasizes that for small  time transitions of the scheme the chosen cutoffed drift is negligible with respect to the diffusive behavior of the Brownian motion.

With \eqref{CTR_GRAD_GAUSS} and  the definition \eqref{cutoffb} of  $b_h$, then H\"older's inequality in space, that $t-s\ge \frac{t}{2}$ for $s\in[0,t_1]$ when $t\ge 2h$, H\"older's inequality in time, we obtain that
\begin{align}
  |\Delta^5_t|&\le \frac{C}{h}\int_{s=0}^{t_1\wedge t}\int_{r=0}^h\int_{\R^d}g_c(s,z-x)\left(|b(s,z)|+|b(r,x)|\wedge Bh^{-(\frac 1q+\frac d{2\rho})}\right)\notag\\
  &\hspace*{2cm}\times \frac{g_c(t-s,y-z)}{\sqrt{t-s}}dzdrds\notag\\
            &\le Cg_c(t,y-x)\left(t^{\frac d{2\rho}}\int_0^{t_1\wedge t}\frac{\|b(s,.)\|_{L^\rho}ds}{s^{\frac{d}{2\rho}}(t-s)^{\frac 1 2+\frac{d}{2\rho}}}+B h^{-(\frac 1q+\frac d{2\rho})}\int_0^{t_1\wedge t}\frac{ds}{\sqrt{t-s}}\right)\notag\\&\le Cg_c(t,y-x)\bigg(\I_{\{t<2h\}}\left(t^{\frac d{2\rho}}\int_0^{t}\frac{\|b(s,.)\|_{L^\rho}ds}{s^{\frac{d}{2\rho}}(t-s)^{\frac 1 2+\frac{d}{2\rho}}}+B h^{-(\frac 1q+\frac d{2\rho})}\int_0^{t}\frac{ds}{\sqrt{t-s}}\right)\notag\\&\phantom{\le Cg_c(t,y-x)\bigg(}+\I_{\{t\ge 2h\}}t^{-\frac 12}\left(\int_0^{t_1}\frac{\|b(s,.)\|_{L^\rho}ds}{s^{\frac{d}{2\rho}}}+Bh^{-(\frac 1 q+\frac d{2\rho})}t_1\right)\bigg)\notag\\
 &\le Cg_c(t,y-x)(\|b\|_{L^q-L^\rho}+B)\left(\I_{\{t<2h\}}[t^{\frac{\alpha} 2}+h^{\frac{\alpha} 2}]+\I_{\{t\ge 2h\}}h^{\frac{\alpha}{2}}\right)\notag\\ &\le Cg_c(t,y-x)h^{\frac{{\alpha}} 2}\label{THE_ALTERNATIVE_DELTA_1_FOR_ERR} .
\end{align}
For the scheme \eqref{euler_BIS}, we have removed the drift on the first time step to get rid of the contribution of $|b(r,x)|\wedge (Bh^{-\frac 12})$ in the previous analysis which would have led to a bound in $(t\wedge h)^{\frac 1 2}(t\vee h)^{-\frac 12}g_c(t,y-x)$. 

We next suppose that $t\ge 3h$ to estimate the error contribution $\Delta^3_t$, since this contribution vanishes otherwise. Using \eqref{estischemtemps}, \eqref{CTR_GRAD_GAUSS} and $|b_h|\le |b|$, $\tau^h_s\ge \frac s2$ when $s\ge t_1$, then applying Lemma \ref{GAUSS_INT_LEMMA} with $\rho'=\rho$, $q'=q$, $\varphi=|b|$, $f=1$, $\beta=\frac{{\alpha} }2$, $\gamma=\frac 12$, we obtain that
\newpage
\begin{align}
   &|\Delta^3_t|\notag\\
   \le &C\left(1+\I_{\{\alpha=1\}}\ln\left(\frac Th\right)\right)\int_{t_1}^{\tau_t^{{h}}-h} ds \frac{(s-\tau^h_s)^{\frac {{\alpha} }2}}{(\tau^h_s)^{\frac {{\alpha}} 2}}\int_{\R^d}g_c(s,z-x)|b(s,z)|\frac{g_c(t-s,y-z)}{\sqrt{t-s}}dz\notag\\\le &C \left(1+\I_{\{\alpha=1\}}\ln\left(\frac Th\right)\right)h^{\frac{{ \alpha}} 2}\int_{0}^{t} \frac{ds}{s^{\frac{{\alpha}} 2}}\int_{\R^d}g_c(s,z-x)|b(s,z)|\frac{g_c(t-s,y-z)}{\sqrt{t-s}}dz\notag\\\le& C h^{\frac{{\alpha} }2}\left(1+\I_{\{\alpha=1\}}\ln\left(\frac Th\right)\right)
   \|b\|_{L^q-L^\rho}g_c(t,y-x).\label{THE_ALTERNATIVE_DELTA_4_FOR_ERR}
\end{align}

We still suppose that $t\ge 3h$ to estimate the error contribution $\Delta^4_t$, since this contribution vanishes otherwise. Using now $|b_h|\le |b|$ for the first inequality, \eqref{estigausschem}, \eqref{DIFF_GRAD_GAUSS_LEM_TIME}, \eqref{estig} with $(c,s,x,z)$ replaced by $(\frac{1+c}{2},s-t_j,z,w)$ and \eqref{DIFF_GRAD_GAUSS_LEM} for the second inequality, 
$$|z-w|^\alpha g_{\frac{1+c}{2}}(s-t_j,w-z)\le \left(\frac{2c}{1+c}\right)^{\frac d 2}\sup_{a\ge 0} a^\alpha e^{-\frac{(c-1)a^2}{2c(1+c)}}(s-t_j)^{\frac \alpha 2}g_c(s-t_j,w-z)$$ and $g_{\frac{1+c}{2}}\le \left(\frac{2c}{1+c}\right)^{\frac d 2}g_c$ combined with Gaussian convolution and $|r-s|\vee (s-t_j)\le h$ for the third inequality, H\"older's inequality in space for the fourth inequality, we obtain for $\tilde \alpha \in[\alpha,1]$ to be specified later on :
\begin{align*}
  &|\Delta^4_t|\\
  \le& \sum_{j=1}^{\lfloor \frac t h\rfloor -2}\frac 1 h\int_{t_j}^{t_{j+1}}\!\! ds\int_{t_j}^{t_{j+1}}\!\! dr\int_{\R^d\times\R^d}\Gamma^h(0,x,t_j,z)g_1(s-t_j,w-z-b_h(r,z)(s-t_j))|b(r,z)|\\
   &
  \times \left(|\nabla_y g_1(t\!-\!r,y\!-\!z)\!\!-\nabla_y g_1(t\!-\!s,y\!-\!z)|
  +|\nabla_y g_1(t\!-\!s,y\!-\!z)\!-\!\nabla_y g(t\!-\!s,y\!-\!w)|\right)dzdw\\
   \le & \frac C h\sum_{j=1}^{\lfloor \frac t h\rfloor -2}\int_{t_j}^{t_{j+1}}ds\int_{t_j}^{t_{j+1}}dr\int_{\R^d\times\R^d}g_c(t_j,z-x)g_{\frac{1+c}{2}}(s-t_j,w-z)|b(r,z)|\\&
   \left(\frac{|r\!-\!s|^{\frac {{\tilde\alpha}} 2}}{(t\!-\!r\vee s)^{\frac{1+{\tilde\alpha}}{2}}}g_c(t\!-\!r\vee s,y\!-\!z)+\frac{|z\!-\!w|^{{\tilde\alpha}}}{(t-s)^{\frac{1+{\tilde\alpha}}{2}}}(g_c(t\!-\!s,y\!-\!z)+g_c(t\!-\!s,y\!-\!w))\right)dzdw\\
  \le& C h^{\frac{{\tilde\alpha}} 2-1}
    \sum_{j=1}^{\lfloor \frac t h\rfloor -2}\int_{t_j}^{t_{j+1}}ds\int_{t_j}^{t_{j+1}}dr\int_{\R^d}\frac{g_c(t_j,z-x)|b(r,z)|}{(t-r\vee s)^{\frac{1+{\tilde\alpha}}{2}}}\Big(g_c(t-r\vee s,y-z)\\
    &\phantom{\le C h^{\frac\alpha 2-1}
    \sum_{j=1}^{\lfloor \frac t h\rfloor -1}\int_{t_j}^{t_{j+1}}ds\int_{t_j}^{t_{j+1}}dr\int_{\R^d}}+g_c(t-s,y-z)+g_c(t-t_j,y-z)\Big)dz
     \\\le& Ch^{\frac{{\tilde\alpha}} 2-1}\sum_{j=1}^{\lfloor \frac t h\rfloor -2}\int_{t_j}^{t_{j+1}}ds\int_{t_j}^{t_{j+1}}dr\frac{t^{\frac{d}{2\rho}}\|b(r,.)\|_{L^\rho}}{t_j^{\frac{d}{2\rho}}(t-r\vee s)^{\frac{1+{\tilde\alpha}}{2}+\frac{d}{2\rho}}}\Big(g_c(t+t_j-r\vee s,y-x)\\
     &\phantom{\le C h^{\frac\alpha 2-1}
    \sum_{j=1}^{\lfloor \frac t h\rfloor -1}\int_{t_j}^{t_{j+1}}ds\int_{t_j}^{t_{j+1}}dr\int_{\R^d}}+g_c(t+t_j-s,y-x)+g_c(t,y-x)\Big).
\end{align*}
Since $t\ge 3h$, for $r,s\in[t_j,t_{j+1}]$ with $j\le \lfloor \frac t h\rfloor -2$, $t\ge t+t_j-s\ge t+t_j-r\vee s\ge t-h\ge \frac{2t}{3}$ so that $$g_c(t+t_j-r\vee s,y-x)+g_c(t+t_j-s,y-x)\le \frac{3^{\frac d 2}}{ 2^{\frac d 2-1}}g_c(t,y-x).$$
Since for $r,s\le \tau_t^{{h}}-h$ in the same time-step, $(t-r\vee s)\ge \frac{t-r}{2}$ and $t_j\ge \frac{r}{2}$ for $r\in[t_j,t_{j+1}]$ when $j\ge 1$  
, we deduce that
\begin{align*}
  &\frac{|\Delta^4_t|}{g_c(t,y-x)}\le Ch^{\frac{{\tilde\alpha}} 2}t^{\frac{d}{2\rho}}\int_0^{\tau_t^{{h}}{-h}}\frac{\|b(r,.)\|_{L^\rho}dr}{r^{\frac{d}{2\rho}}(t-r)^{\frac{1+{\tilde\alpha}}{2}+\frac{d}{2\rho}}}\\
  &\le Ch^{\frac{{\tilde\alpha}} 2}\left(t^{-\frac{1+{\tilde\alpha}}{2}}\int_0^{\frac{t}{2}}\frac{\|b(r,.)\|_{L^\rho}dr}{r^{\frac{d}{2\rho}}}+\int_{\frac{t}{2}}^{t-h}\frac{\|b(r,.)\|_{L^\rho}dr}{(t-r)^{\frac{1+{\tilde\alpha}}{2}+\frac{d}{2\rho}}}\right)\\&\le Ch^{\frac{{\tilde\alpha}} 2}\left( t^{-\frac{1+{\tilde\alpha}}{2}}\|b\|_{L^q-L^\rho}t^{\frac{1+{\alpha}}{2}}+\|b\|_{L^q-L^\rho}\left(\I_{\{\tilde\alpha>\alpha\}}h^{\frac{\alpha-\tilde\alpha}{2}}+\I_{\{\tilde\alpha=\alpha\}}\left(\ln\left(\frac t{2h}\right)\right)^{1-\frac 1 q}\right)\right)\label{THE_ALTERNATIVE_DELTA_3_FOR_ERR},\end{align*}
where we used H\"older's inequality and $\frac{q}{q-1}\left(\frac{1+{\tilde\alpha}}{2}+\frac{d}{2\rho}\right)\ge \frac{q}{q-1}\left(\frac{1+{\alpha}}{2}+\frac{d}{2\rho}\right)= 1$ since $\tilde \alpha\ge \alpha$ for the last inequality. When $\alpha<1$ (i.e. either $\rho<\infty$ or $q<\infty$), we choose $\tilde\alpha\in (\alpha,1]$, while when $\alpha=1$ (i.e. $\rho=q=\infty$), the only possible choice is $\tilde\alpha=\alpha=1$.  Using that $t\ge 3h$, we conclude that
\begin{equation}
   |\Delta^4_t|\le C h^{\frac{\alpha}{2}}\left(1+\I_{\{\alpha=1\}}\ln\left(\frac Th\right)\right)g_c(t,y-x).\label{THE_ALTERNATIVE_DELTA_3_FOR_ERR}
\end{equation}

Let us now suppose that $t\ge h$ to estimate $\Delta^6_t$. Using \eqref{estig} with $(s,x)$ replaced by $(s-\tau^h_s,w)$, \eqref{CTR_GRAD_GAUSS} and $|b_h|\le |b|$ then Gaussian convolution, H\"older's inequality in space, last that $s\ge \tau^h_s\ge \frac t 3$ for $s\ge (\tau_t^{{h}}-h)\vee t_1$ with $t\ge h$ ($\tau^h_s\ge h\ge\frac{t}{3}$ when $t\in[h,3h]$ while when $t>3h$, $\tau^h_t-h>\frac{t}{2}$), $t-\tau^h_s\ge t-s$  and H\"older's inequality in time, we obtain that
\begin{align}
  |\Delta^6_t|\le &\frac{C}{h}\int_{s=(\tau_t^{{h}}-h)\vee t_1}^{t}\int_{r=\tau^h_s}^{\tau^h_s+h}\int_{\R^d\times \R^d}g_c(\tau^h_s,w-x)g_c(s-\tau^h_s,z-w)\notag\\
  &\hspace*{2cm}\times\left(|b(s,z)|+|b(r,w)|\right)\frac{g_c(t-s,y-z)}{\sqrt{t-s}}dz dw drds\notag\\\le &C\int_{(\tau_t^{{h}}-h)\vee t_1}^{t}\int_{\R^d}g_c(s,z-x)|b(s,z)|\frac{g_c(t-s,y-z)}{\sqrt{t-s}} dz ds\notag\\&+\frac C h\int_{s=(\tau_t^{{h}}-h)\vee t_1}^{t}\int_{r=\tau^h_s}^{\tau^h_s+h}\int_{\R^d}|b(r,w)|g_c(\tau^h_s,w-x)\frac{g_c(t-\tau^h_s,y-w)}{\sqrt{t-s}}dw dsdr\notag\\
            \le &Cg_c(t,y-x)\left(t^{\frac d{2\rho}}\int_{(\tau_t^{{h}}-h)\vee t_1}^{t}\frac{\|b(s,.)\|_{L^\rho}ds}{s^{\frac{d}{2\rho}}(t-s)^{\frac 1 2+\frac{d}{2\rho}}}\right.\notag\\
            &\left.+\frac{t^{\frac d{2\rho}}}{h}\int_{s=(\tau_t^{{h}}-h)\vee t_1}^{t}\int_{r=\tau^h_s}^{\tau^h_s+h}\frac{\|b(r,.)\|_{L^\rho}}{(\tau^h_s)^{\frac d{2\rho}}(t-\tau^h_s)^{\frac{d}{2\rho}}}dr\frac{ds}{\sqrt{t-s}}\right)\notag\\\le &C\|b\|_{L^q-L^\rho}g_c(t,y-x)\left(\left(t-(\tau_t^{{h}}-h)\vee t_1\right)^{\frac{\alpha}{2}}+h^{-\frac 1 q}\int_{(\tau_t^{{h}}-h)\vee t_1}^{t}\frac{ds}{(t-s)^{\frac 1 2+\frac d{2\rho}}}\right)\notag\\\le &Cg_c(t,y-x)\left(\left(t-(\tau_t^{{h}}-h)\vee t_1\right)^{\frac{\alpha}{2}}+h^{-\frac 1 q}\left(t-(\tau_t^{{h}}-h)\vee t_1\right)^{\frac{1}2-\frac d{2\rho}}\right)\notag\\
            \le& Cg_c(t,y-x)h^{\frac{{\alpha}} 2}.\label{THE_ALTERNATIVE_DELTA_2_FOR_ERR}
\end{align}

Let us conclude with the term $\Delta_t^1$ which can be used in a Gronwall type argument. Namely, set for $u\in (0,T] $:
$$f(u):=\sup_{(x,z)\in (\R^d) ^2}\frac{|\Gamma^h(0,x,u,z)-\Gamma(0,x,u,z)|}{g_c(u,x-z)}.
$$
 We know from \eqref{estigausschem} and \eqref{HK_AND_GRAD_SPACE_DENS} that $\sup_{s\in(0,T]}\  f(s)<+\infty$. Similarly to the proof of Lemma \ref{GAUSS_INT_LEMMA}, we write with $\frac 1{\bar q}=1-\frac 1 q$:
\begin{align*}
     |\Delta^1_t|&\le \int_0^t ds  f(s) \int_{\R^d} g_c(s,z-x)\ |b(s,z)|  \frac{g_c(t-s,y-z)}{(t-s)^{\frac 12}}dz\notag\\
      &\le Ct^{\frac{d}{2\rho} } \Bigg(\int_0^t ds\left(  \frac{f(s)}{s^{\frac{d}{2\rho}}(t-s)^{\frac 12+\frac{d}{2\rho}} } \right)^{\bar q}\Bigg)^{\frac 1{\bar q}}\|b\|_{L^q-L^p}g_c(t,y-x).
\end{align*}
With \eqref{decomperr}, \eqref{THE_ALTERNATIVE_DELTA_5_FOR_ERR}, \eqref{THE_ALTERNATIVE_DELTA_1_FOR_ERR}, \eqref{THE_ALTERNATIVE_DELTA_4_FOR_ERR}, \eqref{THE_ALTERNATIVE_DELTA_3_FOR_ERR} and \eqref{THE_ALTERNATIVE_DELTA_2_FOR_ERR}, we derive:
\begin{align*}
&f(t)\\
\le& C\left(h^{ \frac{\alpha} 2}\left(1+\left(\I_{\{\alpha=1\}}+\I_{\{(d,q)=(1,\infty)\}}\right)\ln\left(\frac Th\right)\right)+t^{\frac{d}{2\rho} } \left(\int_0^t   \frac{[f(s)]^{\bar q}ds}{s^{\bar q \frac{d}{2\rho}}(t-s)^{\bar q(\frac 12+\frac{d}{2\rho})}} \right)^{\frac 1{\bar q}}\right).
\end{align*}
Thus, up to an additional convexity inequality  if $ q<+\infty\iff \bar q>1$,  we get:
\begin{align*}
&[f(t)]^{\bar q}\\
\le& C^{\bar q}2^{\bar q-1}\left(h^{ \frac{\alpha\bar q} 2 }\left(1+\left(\I_{\{\alpha=1\}}+\I_{\{(d,q)=(1,\infty)\}}\right)\ln\left(\frac Th\right)\right)^{\bar q}+t^{\frac{d\bar q}{2\rho} } \int_0^t   \frac{[f(s)]^{\bar q}ds}{s^{\frac{d\bar q }{2\rho}}(t-s)^{\bar q(\frac 12+\frac{d}{2\rho})} } \right).
\end{align*}
It eventually follows from Lemma \ref{GR_VOL_LEMMA} applied with $\check\beta=\tilde\beta=\frac{d\bar q}{2\rho}$ and $\hat\beta=\bar q\left(\frac 1 2+\frac d{2\rho}\right)$ (by \eqref{COND_KR}, since $\frac d{2\rho}+\frac 1 q<\frac 12\Leftrightarrow \bar q\left(\frac 1 2+\frac d{2\rho}\right)<1$, one has $\hat\beta<1$ and $\tilde\beta+\hat\beta-1<\check\beta$) that
$$\sup_{t\in(0,T]}f(t)\le Ch^{\frac{\alpha}2}\left(1+\left(\I_{\{\alpha=1\}}+\I_{\{(d,q)=(1,\infty)\}}\right)\ln\left(\frac Th\right)\right),$$
which concludes the proof of Theorem \ref{MTHM} for the scheme \eqref{conteuler}. For the scheme \eqref{euler_BIS}, the conclusion holds without $\I_{\{(d,q)=(1,\infty)\}}$ in the right-hand side thanks to the improved estimation of the cutoff error $\bar\Delta^2_t$.

\section{Density estimates for the Euler scheme}\label{SEC_EUL_CTR}
This Section is dedicated to the proof of Proposition \ref{EST_DENS_SCHEME}.
\subsection{Existence of a transition density satisfying the Duhamel formula \eqref{DUHAMEL_SCHEME_PROP} and the Gaussian estimation \eqref{estigausschem}}\label{densduh}
For $k\in\leftB 0,n\rightB$ and $x\in\R^d$ let
$$X^h_t=x+(W_t-W_{t_k})+\int_{t_k}^t b_h\left(U_{\lfloor s/h\rfloor},X^h_{\tau^h_s}\right)ds,\quad t\in[t_k,T]$$
denote the Euler scheme started from $x$ at the discretization time $t_k=kh=\frac{kT}{n}$. We emphasize that the cutoffed drift coefficient $b_h$ defined in \eqref{cutoffb} coincides with $b$ as long as $|b|\le Bh^{-(\frac{1}{q}+\frac d{2\rho})}$ and is bounded from above by the threshold $Bh^{-(\frac{1}{q}+\frac d{2\rho})}$.
For $t\in (t_k,t_{k+1}]$, $X^h_t$ admits the density
\begin{align*}
\Gamma^h(t_k,x,t,y)&=\E[g_1\left(t-t_k,y-x-(t-t_k)  b_h(U_k,x)\right)]\\
&=\int_0^h \frac{ds}h g_1(t-t_k,y-x-(t-t_k)
  b_h(t_k+s,x))
  \end{align*}
with respect to the Lebesgue measure on $\R^d$. Since $z\mapsto g_1(t-t_k,z)$ is continuous and bounded by $(2\pi(t-t_k))^{-\frac{d}{2}}$, Lebesgue's theorem implies that $y\mapsto\Gamma^h(t_k,x,t,y)$ is continuous. Moreover, \eqref{estig} implies that

\begin{align}
\forall t\in(t_k,t_{k+1}],\;\forall x,y\in\R^d,\;\Gamma^h(t_k,x,t,y)\le c^{\frac{d}{2}}e^{\frac{B^2h^{\alpha}}{2(c-1)}}g_c(t-t_k,y-x).\label{estibrutunpas}
\end{align}
By the Markov structure of the Euler scheme, for $t\in (t_{k+1},T]$, 
\begin{align*}
  &\Gamma^h(t_k,x,t,y)\\
  =&\int_{(\R^d)^{\lceil\frac t h\rceil-k-1}}\Gamma^h(t_k,x,t_{k+1},z_1)\prod_{j=k+1}^{\lceil\frac t h\rceil-2}\Gamma^h(t_j,z_{j-k},t_{j+1},z_{j+1-k})\\
  &\hspace*{2cm}\times \Gamma^h(t_{\lceil\frac t h\rceil-1},z_{\lceil\frac t h\rceil-k-1},t,y)dz_1\cdots dz_{\lceil\frac t h\rceil-k-1}\\
\end{align*}
where, since $y\mapsto \Gamma^h(t_{\lceil\frac t h\rceil-1},z_{\lceil\frac t h\rceil-k-1},t,y)$ is continuous and bounded by $(2\pi(t-t_{\lceil\frac t h\rceil-1}))^{-\frac{d}{2}}$, Lebesgue's theorem implies that the left-hand side is a continuous function of $y$. Moreover, the last equality combined with \eqref{estibrutunpas} then Gaussian convolution imply that

  \begin{align*}
&\Gamma^h(t_k,x,t,y) \\
\le& \left(c^{\frac{d}{2}}e^{\frac{B^2{h^{\alpha}}}{2(c-1)}}\right)^{\lceil\frac t h\rceil-k}\int_{(\R^d)^{\lceil\frac t h\rceil-k-1}}g_c(t_{k+1}-t_k,z_1-x)\\
  & \times \prod_{j=k+1}^{\lceil\frac t h\rceil-2}g_c(t_{j+1}-t_j,z_{j+1-k}-z_{j-k})g_c(t-t_{\lceil\frac t h\rceil-1},y-z_{\lceil\frac t h\rceil-k-1})dz_1\cdots dz_{\lceil\frac t h\rceil-k-1}\\
  =&\left(c^{\frac{d}{2}}e^{\frac{B^2{h^{\alpha}}}{2(c-1)}}\right)^{\lceil\frac t h\rceil-k}g_c(t-t_k,y-x).
\end{align*}
The estimation \begin{equation}
   \Gamma^h(t_k,x,t,y)\le {c^{\frac{dT}{2h}}e^{\frac{B^2 Th^{\alpha-1}}{2(c-1)}}}g_c(t-t_k,y-x)\label{estibrut}
 \end{equation} is therefore valid for all $(x,y)\in (\R^d)^2$, $k\in\leftB 0,n\rightB$ and $t\in(t_k,T]$. Let us now check that the factor ${c^{\frac{dT}{2h}}e^{\frac{B^2 Th^{\alpha-1}}{2(c-1)}}}$ which goes to $+\infty$ when $h\to 0$ can be replaced by some finite constant not depending on the time-step $h$ and study the regularity of $\Gamma^h(t_k,x,t,y)$ in its forward variables $t$ and $y$.
 
 Let $t\in(t_k,T]$, $\varphi:\R^d\to\R^d$ be a $C^2$ function with compact support and $v(s,y)=\I_{\{s<t\}}g_1(t-s,.)\star \varphi(y)+\I_{\{s=t\}}\varphi(y)$. The function $v$ is bounded together with its spatial derivatives up to the order $2$ and its first order time derivative on the domain $[0,t]\times\R^d$ where it solves the heat equation
 \begin{equation*}
   \begin{cases}
      \partial_s v(s,y)+\frac{1}{2}\Delta v(s,y)=0,\;(s,y)\in[0,t]\times \R^d,\\v(t,y)=\varphi(y),\;y\in\R^d.
   \end{cases}
 \end{equation*}
 By It\^o's formula,
 $$\varphi(X^h_t)=v(t_k,x)+\int_{t_k}^t\nabla v(s,X^h_s).dW_s+\int_{t_k}^t \nabla v(s,X^h_s).b_h\left(U_{\lfloor s/h\rfloor},X^h_{\tau^h_s}\right)ds.$$
 Since $\nabla v$ and $b_h$ are bounded and, by \eqref{CTR_GRAD_GAUSS}, \eqref{estibrut} and Gaussian convolution, $\E[|\nabla g_1(t-s,X^h_{s}-y)|]\le C \frac{g_c(t-t_k,y-x)}{\sqrt{t-s}}$, taking the expectation and using Fubini's theorem, we deduce that
 \begin{align*}
 \int_{\R^d}\varphi(y)\Gamma^h(t_k,x,t,y)dy=&\int_{\R^d}\varphi(y)g_1(t-t_k,x-y)dy\\
 &+\int_{\R^d}\varphi(y)\int_{t_k}^t\E\left[b_h\left(U_{\lfloor s/h\rfloor},X^h_{\tau^h_s}\right).\nabla g_1(t-s,X^h_{s}-y)\right]ds    dy.
 \end{align*}
 Since $\varphi$ is arbitrary and $g_1$ is even in its spatial variable, we deduce that $dy$ a.e.,
 $$\Gamma^h(t_k,x,t,y)=g_1(t-t_k,y-x)-\int_{t_k}^t\E\left[b_h\left(U_{\lfloor s/h\rfloor},X^h_{\tau^h_s}\right).\nabla_y g_1(t-s,y-X^h_{s})\right]ds.$$
 This equality even holds for each $y\in \R^d$ since the left-hand side and the first term in the right-hand side are continuous functions of $y$ and in the derivation of \eqref{estischemspace} below we will check that the second term in the right-hand side satisfies the H\"older estimate in this inequality and is therefore also continuous in $y$.

The proof of Proposition \ref{EST_DENS_SCHEME} relies on this Duhamel formula where we expand $\Gamma^h$ around the Brownian semi-group. 
We could as well have considered the full parametrix expansion of the density of the scheme, used for instance in \cite{kona:mamm:02} or \cite{kona:kozh:meno:17}, but the \textit{one-step} Duhamel formulation is more consistant with the approach we also used to estimate the error of the Euler scheme.
We have, using that for $r\in[t_j,t_{j+1}]$, 
$X^h_r=X^h_{t_j}+W_r-W_{t_j}+b_h(U_{j},X^h_{t_j})(r-t_j)$, the independence between $X^h_{t_j},W_r-W_{t_j}$ and $U_{j}$ and the Gaussian semi-group property for the second equality,
\begin{align}
  &\Gamma^h(t_k,x,t,y)\notag\\
  =&g_1(t-t_k,y-x)-\sum_{j=k}^{\lceil\frac t h\rceil-1}\int_{t_j}^{t_{j+1}\wedge t}\E\left[b_h(U_{j},X^h_{t_j})\cdot\nabla_y g_1(t-r,y-X^h_r)\right]dr\notag\\
  =&g_1(t-t_k,y-x)\notag\\
  &-\sum_{j=k}^{\lceil\frac t h\rceil-1}\frac 1 h\int_{r=t_j}^{t_{j+1}\wedge t}\int_{s=t_j}^{t_{j+1}}\E\left[b_h(s,X^h_{t_j})\cdot\nabla_y g_1(t-t_j,y-X^h_{t_j}-b_h(s,X^h_{t_j})(r-t_j))\right]dsdr\notag\\
=&g_1(t-t_k,y-x)-\frac{1}{h}\int_{r=t_k}^{t_{k+1}\wedge t}\int_{s=t_k}^{t_{k+1}}b_h(s,x)\cdot\nabla_y g_1(t-t_k,y-x-b_h(s,x)(r-t_k))dsdr\notag\\
  &-\sum_{j=k+1}^{\lceil\frac t h\rceil-1}\frac{1}{h}\int_{r=t_j}^{t_{j+1}\wedge t}\int_{s=t_j}^{t_{j+1}}\int_{\R^d}\Gamma^h(t_k,x,t_j,z)\notag\\
  &\hspace*{3cm}\times b_h(s,z)\cdot\nabla_yg_1(t-t_j,y-z-b_h(s,z)(r-t_j))dzdsdr.\label{duhamgauschem}
\end{align}
Since, by \eqref{CTR_GRAD_GAUSS}, $\forall c>1,\ \exists C<\infty,\;\forall u\in (0,T],\forall x\in\R^d,\;|\nabla g_1(u,x)|\le \frac{C}{\sqrt{u}}g_{\frac{1+c}{2}}(u,x)$ applying \eqref{majodiff} with $c'=\frac{2c}{1+c}$, we obtain that
\begin{align}
  \exists C<\infty,\; \forall u\in (0,T],\;\forall u'\in[0,u\wedge h],\;\forall s\in [0,T],\;\forall (x,y)\in(\R^d)^2,\notag\\
  \;|\nabla g_1(u,y-x-b_h(s,x)u')|\le C \frac{g_c(u,y-x)}{\sqrt{u}}.\label{majogradgschem}
\end{align}
Set $m_{k,j}=\sup_{(x,z)\in(\R^d)^2}\frac{\Gamma^h(t_k,x,t_j,z)}{g_c(t_j-t_k,z-x)}$. 
It is clear from \eqref{estibrut} that $m_{k,j}<+\infty $.
Using that  $\|b_h\|_\infty\le Bh^{-(\frac{1}{q}+\frac d{2\rho})}$,  H\"older's inequality in space for the second inequality (see also Equation \eqref{BD_G_EPS} in the proof of Lemma \ref{GAUSS_INT_LEMMA}), and then that $t_\ell-t_k\ge h$ and $(t_j-t_k)\ge \frac 12(s-t_k)$ for $s\in[t_j,t_{j+1}]$ with $j\ge k+1$ for the third one, and eventually H\"older's inequality in time for the fourth inequality, we deduce that for $\ell\in\leftB k+1,n\rightB$,
\begin{align*}
  &
  \frac{|\Gamma^h(t_k,x,t_\ell,y)|}{g_c(t_\ell-t_k,y-x)}\\
  \le& c^{\frac{d}{2}}+\frac{CBh^{1-(\frac 1q+\frac d{2\rho})}}{\sqrt{t_\ell-t_k}}\\
  &+\sum_{j=k+1}^{\ell-1}\frac{m_{k,j}}{g_c(t_\ell-t_k,y-x)}\int_{t_j}^{t_{j+1}}\int_{\R^d}g_c(t_j-t_k,z-x)|b_h(s,z)|\frac{g_c(t_\ell-t_j,y-z)}{\sqrt{t_\ell-t_j}}dzds\\
                                                        \le &c^{\frac{d}{2}}+\frac{CBh^{\frac 12+\frac{\alpha}2}}{\sqrt{t_\ell-t_k}}+C\sum_{j=k+1}^{\ell-1}\frac{m_{k,j}(t_\ell-t_k)^{\frac{d}{2\rho}}}{(t_j-t_k)^{\frac{d}{2\rho}}(t_\ell-t_j)^{\frac 12+\frac{d}{2\rho}}}\int_{t_j}^{t_{j+1}}\|b(s,.)\|_{L^\rho}ds\\
                                                        \le& C+C\max_{j=k+1}^{\ell-1}m_{k,j}(t_\ell-t_k)^{\frac{d}{2\rho}}\int_{t_k}^{t_\ell}\frac{\|b(s,.)\|_{L^\rho}ds}{(s-t_k)^{\frac{d}{2\rho}}(t_\ell-s)^{\frac 12+\frac{d}{2\rho}}}\\
  \le &C+C\max_{j=k+1}^{\ell-1}m_{k,j}\|b\|_{L^q-L^\rho}(t_{\ell}-t_k)^{{\frac{\alpha}2}}.
\end{align*}
Taking the supremum over $(x,y)\in(\R^d)^2$ and remarking that the right-hand side is non-decreasing with $\ell$, we deduce that 
$$\max_{j=k+1}^\ell m_{k,j}\le C+C\|b\|_{L^q-L^\rho}(t_{\ell}-t_k)^{{\frac{\alpha}2}}\max_{j=k+1}^\ell m_{k,j}.$$ Hence when $t_\ell-t_k\le \theta:=\left(2C\|b\|_{L^q-L^\rho}\right)^{-{\frac 2{\alpha}}}$, then $\max_{j=k+1}^\ell m_{k,j}\le 2C$. Let us now assume that $h\le\theta$ so that $\max_{\kappa=1}^{n-1}\max_{j=\kappa+1}^{(\kappa+\lfloor\frac\theta h\rfloor)\wedge n} m_{\kappa,j}\le 2C$. This gives the Gaussian estimate for the density of the scheme, independently of $h$, provided the associated time interval is small enough but at a \textit{macro} scale. For an arbitrary \textit{macro} time interval the idea is now to chain the previous estimates. Assuming that $t-t_k\ge \theta $ and
setting $J=\lceil\frac{t-t_k}{\tau^h_\theta}\rceil-1$, we have, under the convention $y_0=x$,
\begin{align*}
&\Gamma^{h}(t_k,x,t,y)\\
=&\int_{(\R^d)^{J}}\prod_{j=1}^{J} \Gamma^{h}(t_k+(j-1)\tau^h_\theta,y_{j-1},t_k+j\tau^h_\theta,y_j)\Gamma^{h}(t_k+J\tau^h_\theta,y_{J},t,y)dy_1\ldots dy_J.
\end{align*}
Since $t-(t_k+J\tau^h_\theta)\le \tau^h_\theta$, when $t$ does not belong to the discretization grid $\{t_j=jh:{j\in\leftB 0, n\rightB}\}$, combining the just derived bound and \eqref{estibrutunpas}, we get 
\begin{align*}
\Gamma^{h}(t_k+J\tau^h_\theta,y_{J},t,y)=&\int_{\R^d}\Gamma^{h}(t_k+J\tau^h_\theta,y_{J},\tau^h_t,z)\Gamma^{h}(\tau^h_t,z,t,y)dz\\
\le& 2Cc^{\frac{d}{2}}e^{\frac{B^2{h^{\alpha}}}{2(c-1)}}g_c(t-(t_k+J\tau^h_\theta),y_J-y),
\end{align*}
and the same estimation holds without the factor $c^{\frac{d}{2}}e^{\frac{B^2{h^{\alpha}}}{2(c-1)}}$ when $t$ belongs to the discretization grid. Hence, when $h\le\theta$ which implies $\tau^h_\theta>\frac\theta 2$, proceeding similarly to the proof of \eqref{estibrut} (Gaussian chaining argument) with $h$ replaced by $\tau_\theta^h$, we derive:
\begin{align*}
\forall 0\le k<n,\;\forall t\in (t_k,T],\;\Gamma^h(t_k,x,t,y)&\le (2C)^{\lceil \frac{t-t_k}{\tau^h_\theta}\rceil}c^{\frac{d}{2}}e^{\frac{B^2{h^{\alpha}}}{2(c-1)}}g_c(t-t_k,y-x)\\
&\le (2C)^{1+2\frac{t-t_k}{\theta}}c^{\frac{d}{2}}e^{\frac{B^2{h^{\alpha}}}{2(c-1)}}g_c(t-t_k,y-x).
\end{align*}
This gives the first estimation \eqref{estigausschem} in the proposition. 

Similar estimates with the factors $e^{\frac{B^2 h^{\alpha}}{2(c-1)}}$ replaced by $e^{\frac{B^2 }{2(c-1)}}$ and with $e^{\frac{B^2 Th^{\alpha-1}}{2(c-1)}}$ replaced by $e^{\frac{B^2 T}{2(c-1)h}}$ in \eqref{estibrut} can be derived for the scheme $\bar X^h $ defined in \eqref{euler_BIS} (and even for the scheme with the same cutoff when the cutoffed drift is kept on the first time-step).
\subsection{H\"older regularity of the transition density in the forward time variable}
We now prove \eqref{estischemtemps}. Let $1\le k<\ell<n$, $x,y\in\R^d$ and $t\in[t_\ell,t_{\ell+1}]$. We want to estimate $\Gamma^h(t_k,x,t_\ell,y)-\Gamma^h(t_k,x,t,y)$, which, according to \eqref{duhamgauschem}, is equal to $\Delta^1+\Delta^2+\Delta^3+\Delta^4$ with
\begin{align*}
  \Delta^1&=g_1(t_\ell-t_k,y-x)-g_1(t-t_k,y-x),\\
  \Delta^2&=\frac{1}{h}\int_{r=t_k}^{t_{k+1}}\int_{s=t_k}^{t_{k+1}}b_h(s,x)\cdot[\nabla g_1(t-t_k,w)-\nabla g_1(t_\ell-t_k,w)]|_{w=y-x-b_h(s,x)(r-t_k)}dsdr,\\
  \Delta^3&=\sum_{j=k+1}^{\ell-1}\frac{1}{h}\int_{r=t_j}^{t_{j+1}}\int_{s=t_j}^{t_{j+1}}\int_{\R^d}\Gamma^h(t_k,x,t_j,z)b_h(s,z)\cdot[\nabla g_1(t-t_j,w)\notag\\
  &\hspace*{2cm}-\nabla g_1(t_\ell-t_j,w)]|_{w=y-z-b_h(s,z)(r-t_j)}dzdsdr,\\
   \Delta^4&=\frac 1 h\int_{r=t_\ell}^{t}\int_{s=t_\ell}^{t_{\ell+1}}\int_{\R^d}\Gamma^h(t_k,x,t_\ell,z)b_h(s,z)\cdot\nabla_yg_1(t\!-\!t_\ell,y\!-\!z\!-\!b_h(s,z)(r\!-\!t_\ell))dzdsdr.
\end{align*}
{From} \eqref{estigausschem}, \eqref{majogradgschem}, {recalling that $|b_h|\le |b| $}, {applying then} H\"older's inequality in space, then in time and {using lastly} that $t-t_k\le 2(t_\ell-t_k)$ and $t-t_\ell\le h$, we obtain that
\begin{align*}
  |\Delta^4|&\le \frac{C(t-t_\ell)} h\int_{t_\ell}^{t_{\ell+1}}\int_{\R^d}g_c(t_\ell-t_k,z-x)|b(s,z)|\frac{g_c(t-t_\ell,y-z)}{\sqrt{t-t_\ell}}dzds\\
            &\le \frac{C(t-t_\ell)(t-t_k)^{\frac{d}{2\rho}}}{h(t_\ell-t_k)^{\frac{d}{2\rho}}(t-t_\ell)^{\frac 1 2+\frac{d}{2\rho}}}\int_{t_\ell}^{t_{\ell+1}}\|b(s,.)\|_{L^\rho}ds g_c(t-t_k,y-x)\\
  &\le \frac{C(t-t_\ell)^{\frac 1 2-\frac{d}{2\rho}}}{h}h^{1-\frac 1 q}\|b\|_{L^q-L^\rho}g_c(t-t_k,y-x)\\\
  &\le C(t-t_\ell)^{{\frac{\alpha} 2}}\|b\|_{L^q-L^\rho}g_c(t-t_k,y-x).
\end{align*}
Reasoning like in the above derivation of \eqref{majogradgschem} with \eqref{DIFF_GRAD_GAUSS_LEM_TIME} replacing \eqref{CTR_GRAD_GAUSS}, we obtain  the existence of a finite constant $C$ such that for all $0<u<u'\le T$, all $u''\in[0,u\wedge h]$, all $s\in[0,T]$ and all $(y,z)\in(\R^d)^2$,
\begin{align}
 & |\nabla g_1(u,y-z-b_h(s,x)u'')-\nabla g_1(u',y-z-b_h(s,z)u'')|
\notag\\  \le& C \frac{|u'-u|\wedge u}{u^{\frac{3}{2}}}(g_c(u,y-z)+g_c(u',y-z))\notag\\\le& C \frac{|u'-u|\wedge u}{u^{\frac{3}{2}}}g_c(u',y-z)\mbox{ if }u'\le 2 u.\label{majodifgradgschem}
\end{align}
This inequality, together with $|b_h|\le Bh^{-(\frac 1q+\frac{d}{2\rho})}$ and $t_\ell-t_k\ge h$, implies that for $\check\alpha\in (0,2]$,
\begin{equation}
   |\Delta^2|
   \le
   C\frac{(t-t_\ell)^{\frac \alpha 2}}{(t_\ell-t_k)^{\frac{1+\alpha}{2}}}g_c(t-t_k,y-x)\int_{t_k}^{t_{k+1}}|b_h(s,x)|ds\le C\frac{(t-t_\ell)^{\frac {\check\alpha} 2}}{(t_\ell-t_k)^{\frac{\check\alpha}{2}}}{h^{\frac{\alpha}{2}}}g_c(t-t_k,y-x).\label{estidel2}
\end{equation}
The estimation of $\Delta_3$ is a bit more involved. We suppose that $\ell\ge k+2$ since $\Delta_3=0$ otherwise.
Let $\tilde \alpha\in[\alpha,1]$.
Using \eqref{estigausschem}, \eqref{majodifgradgschem} and $|b_h|\le |b|$, H\"older's inequality in space then that $s-t_k\le 2(t_j-t_k)$ when $s\in[t_j,t_{j+1}]$ with $j\ge k+1$ and $(t_{\ell-1}-t_k)> \frac{t-t_k}{3}$ and last H\"older's inequality in time,  we obtain that
\begin{align*}
  |\Delta^3|\le &C\Big(\sum_{j=k+1}^{\ell-1}\int_{t_j}^{t_{j+1}}\int_{\R^d}g_c(t_j-t_k,z-x)|b(s,z)|\frac{g_c(t-t_j,y-z)(t-t_\ell)^{\frac {{\tilde \alpha}} 2}}{(t_\ell-t_j)^{\frac{1+{\tilde \alpha}}{2}}}dzds\\
 \le &C(t-t_\ell)^{\frac {{\tilde \alpha}} 2}(t-t_k)^{\frac d{2\rho}}g_c(t-t_k,y-x)\sum_{j=k+1}^{\ell-1}\frac{\int_{t_j}^{t_{j+1}}\|b(s,.)\|_{L^\rho}ds}{(t_j-t_k)^{\frac{d}{2\rho}}(t_\ell-t_j)^{\frac{1+{\tilde \alpha}}{2}+\frac{d}{2\rho}}}\\
  \le& C(t-t_\ell)^{\frac {{\tilde \alpha}} 2}g_c(t-t_k,y-x)\left[(t-t_k)^{\frac d{2\rho}}\int_{t_k}^{t_{{\ell-1}}}\frac{\|b(s,.)\|_{L^\rho}}{(s-t_k)^{\frac{d}{2\rho}}(t_\ell-s)^{\frac{1+\tilde \alpha}{2}+\frac{d}{2\rho}}}ds\right.\\
  &\left.+{h^{-(\frac{1+{\tilde \alpha}}{2}+\frac{d}{2\rho})}\int_{t_{\ell-1}}^{t_\ell}\|b(s,.)\|_{L^\rho} ds}\right]\\
            \le& C(t-t_\ell)^{\frac {{\tilde \alpha}} 2}g_c(t-t_k,y-x)\bigg[(t_\ell-t_k)^{-\frac{1+\tilde \alpha}{2}}\int_{t_k}^{\frac{t_k+t_\ell}{2}}\frac{\|b(s,.)\|_{L^\rho}ds}{(s-t_k)^{\frac{d}{2\rho}}}\\ &+\int^{t_{\ell-1}}_{\frac{t_k+t_\ell}{2}}\frac{\|b(s,.)\|_{L^\rho}ds}{(t_\ell-s)^{\frac{1+\tilde \alpha}{2}+\frac{d}{2\rho}}}
            +{h^{-(\frac{1+{\tilde \alpha}}{2}+\frac{d}{2\rho})}\int_{t_{\ell-1}}^{t_\ell}\|b(s,.)\|_{L^\rho} ds}\bigg]\\
            \le& C(t-t_\ell)^{\frac {{\tilde \alpha} }2}g_c(t-t_k,y-x){\|b\|_{L^q-L^\rho}}\\
            &\times\left[ (t_\ell-t_k)^{{\frac{\alpha-{\tilde \alpha}} 2}}+\I_{\{\tilde\alpha>\alpha\}}h^{\frac{\alpha-\tilde \alpha}{2}}+\I_{\{\tilde\alpha=\alpha\}}\left(\ln\left(\frac{t_\ell-t_k}{2h}\right)\right)^{1-\frac 1 q}+h^{\frac{\alpha-\tilde \alpha}{2}}\right].\end{align*}
          When $\alpha<1$, we may choose $\tilde\alpha\in(\alpha,1]$ while when $\alpha=1$ (i.e. $\rho=q=\infty$) the only possible choice is $\tilde\alpha=1$.
          We conclude that
          $$|\Delta^3|\le C(t-t_\ell)^{\frac {{\alpha} }2}g_c(t-t_k,y-x)\left(1+\I_{\{\alpha=1\}}\ln\left(\frac{t_\ell-t_k}{h}\right)\right).$$
     Using \eqref{DIFF_GRAD_GAUSS_LEM_TIME} to deal with $\Delta^1$, we conclude that
     \eqref{estischemtemps} holds. Similar estimates with $h^{\frac{\alpha}{2}}$ replaced by $1$ in the right-hand side of \eqref{estidel2} can be derived for the scheme $\bar X^h $ defined in \eqref{euler_BIS} (and even for the scheme with the same cutoff when the cutoffed drift is kept on the first time-step).

     \subsection{H\"older regularity of the transition density in the forward spatial variable}
 Let us now suppose that $\alpha<1$ and prove \eqref{estischemspace}.
First of all, by \eqref{estigausschem}, 
\begin{align*}
|\Gamma^h(t_k,x,t,y)-\Gamma^h(t_k,x,t,y')|\le & \Gamma^h(t_k,x,t,y)+\Gamma^h(t_k,x,t,y')\\
\le& C\Big({g}_c(t-t_k,y-x)+{ g}_c(t-t_k,y'-x)\Big),
\end{align*}
so that \eqref{estischemspace} holds in  the \textit{global off-diagonal regime} $|y-y'|^2>(t-t_k)/4$ where $\frac{|y-y'|^{\alpha}\wedge (t-t_k)^{\frac{\alpha}{2}}}{(t-t_k)^{\frac{\alpha}{2}}}> 4^{-\frac{\alpha}{2}}$. Therefore, it is enough to focus on the so-called \textit{global diagonal regime} 
\begin{equation}\label{DIAG}
|y-y'|^2\le (t-t_k)/4
\end{equation}
where we set $u=t-|y-y'|^2$.
By the Duhamel formula \eqref{DUHAMEL_SCHEME_PROP}, we have
\begin{align*}
 \Gamma^h(t_k,x,t,y)&-\Gamma^h(t_k,x,t,y')=g_1(t-t_k,y-x)-g_1(t-t_k,y'-x)+T_{1}+T_{2}+T_3\mbox{ where }\\
 &T_1=\int_{t_k}^{t_{k+1}\wedge t}{\cal E}^h_rdr+\I_{\{t>t_{k+1}\}}\int_{(\tau^h_t-h)\vee t_{k+1}}^{t}{\cal E}^h_rdr,\\&T_2=\I_{\{\tau^h_t-h>t_{k+1},u\ge \tau^h_t-h\}}\int_{t_{k+1}}^{\tau^h_t-h}{\cal E}^h_rdr+\I_{\{u<\tau^h_t-h\}}\int^{\tau^h_u}_{t_{k+1}}{\cal E}^h_rdr,\\&T_3=\I_{\{u<\tau^h_t-h\}}\int_{\tau^h_u}^{\tau^h_t-h}{\cal E}^h_rdr,\\
\mbox{ with }&{\cal E}^h_r:=\E\left[b_h(U_{\lfloor \frac rh\rfloor },x)\cdot\left(\nabla_{y'} g_1(t-r,y'-X^h_r)-\nabla_y g_1(t-r,y-X^h_r)\right)\right]dr.
\end{align*}
Note that when $u<\tau^h_t-h$, then $|y-y'|^2>h$ so that, in view of \eqref{DIAG}, $t>t_{k+4}$ and $(\tau^h_t-h)\wedge u\ge t_{k+3}$. By \eqref{DIFF_GRAD_GAUSS_LEM} and since $\alpha\le 1$, we first get 
\begin{align*}
&|g_1(t-t_k,y-x)- g_1(t-t_k,y'-x)|\\
\le& C\frac{|y-y'|^{\alpha}\wedge (t-t_k)^{\frac{\alpha}{2}}}{(t-t_k)^{\frac{\alpha}{2}}}\left(g_c(t-t_k,y-x)+g_c(t-t_k,y'-x)\right).
\end{align*}
Using \eqref{DIFF_GRAD_GAUSS_LEM}, \eqref{estigausschem} and $|b_h|\le Bh^{-\left(\frac 1 q+\frac d{2\rho}\right)}$, then Gaussian convolution, we then obtain that

\begin{align*}
   {\mathcal E}^h_r&\le C\int_{\R^d}g_c(r-t_k,z-x)\frac{|y-y'|^{\alpha}}{h^{\frac 1 q+\frac d{2\rho}}(t-r)^{\frac{1+\alpha}{2}}}\left(g_c(t-r,y'-z)+g_c(t-r,y-z)\right)dz\\&\le C\left(g_c(t-t_k,y-x)+g_c(t-t_k,y'-x)\right)\frac{|y-y'|^{\alpha}}{h^{\frac 1 q+\frac d{2\rho}}(t-r)^{\frac{1+\alpha}{2}}}.
\end{align*}
Therefore, when $y'\neq y$,
\begin{align*}
  &\frac{|T_1|}{\left(g_c(t-t_k,y-x)+g_c(t-t_k,y'-x)\right)|y-y'|^{\alpha}}\\
  \le& \frac{C}{h^{\frac 1 q+\frac d{2\rho}}}\left(\int_{t_k}^{t_{k+1}\wedge t}\!\!\!\!\!\frac{dr}{(t-r)^\frac{1+\alpha}{2}}+\I_{\{t>t_{k+1}\}}\int_{(\tau^h_t-h)\vee t_{k+1}}^{t}\frac{dr}{(t-r)^{\frac{1+\alpha}{2}}}\right)\\
  \le& \frac{C}{h^{\frac 1 q+\frac d{2\rho}}}\left(\I_{\{t-t_k\le 2h\}}\frac{(t-t_k)^{\frac{1-\alpha}{2}}}{(1-\alpha)/2}\right.\\
  &\left.+\I_{\{t-t_k> 2h\}}\frac{h}{((t-t_k)/2)^{\frac{1+\alpha}{2}}}+\I_{\{t>t_{k+1}\}}\frac{(t-((\tau^h_t-h)\vee t_{k+1}))^{\frac{1-\alpha}{2}}}{(1-\alpha)/2}\right)\le C.\end{align*}

For the scheme $\bar X^h $ defined in \eqref{euler_BIS} (and even for the scheme with the same cutoff when the cutoffed drift is kept on the first time-step), the constant $C$ in the right-hand side should be replaced by $C(t-t_k)^{-\frac{\alpha} 2}$, where the denominator does not prevent from deriving \eqref{estischemspace}. The forthcoming estimations of $T_2$ and $T_3$ rely on the bound $|b_h|\le|b|$ and are valid for the two schemes.                        

Let ${\tilde \alpha}\in (\alpha,1]$. Using $|b_h|\le |b|$, \eqref{estigausschem}, \eqref{estig} and \eqref{DIFF_GRAD_GAUSS_LEM}, then Gaussian convolution, we obtain that  
\begin{align*}
  {\mathcal E}^h_r&\le \frac{C}{h}\int_{\tau^h_r}^{\tau^h_r+h}\int_{\R^d\times\R^d}g_c(\tau^h_r-t_k,z-x)g_c(r-\tau^h_r,w-z)|b(s,z)|\frac{|y-y'|^{\tilde \alpha}}{(t-r)^{\frac{1+{\tilde \alpha}} 2}}\\
  &\hspace*{2.6cm}\times\left(g_c(t-r,y'-w)+g_c(t-r,y-w)\right)dz dwds\\
  &\le \frac{C}{h}\int_{\tau^h_r}^{\tau^h_r+h}\int_{\R^d}g_c(\tau^h_r-t_k,z-x)|b(s,z)|\frac{|y-y'|^{\tilde \alpha}}{(t-r)^{\frac{1+{\tilde \alpha}} 2}}\\
  &\hspace*{2.6cm}\times \left(g_c(t-\tau^h_r,y'-z)+g_c(t-\tau^h_r,y-z)\right)dzds.
\end{align*}
Let us assume that $\tau^h_t-h>t_{k+1}$ (so that $\tau^h_t-h-t_k\ge \frac{t-t_k}{2}$) and set $\ell=\I_{\{u\ge \tau^h_t-h\}}(\lfloor\frac t h\rfloor -1)+\I_{\{u<\tau^h_t-h\}}\lfloor\frac u h\rfloor$. Using that $t-r\ge \frac{t-\tau^h_r}{2}$ for $r\le \tau^h_t-h$ then H\"older's inequality in space, we deduce that when $y'\ne y$
\begin{align*}
 &  \frac{|T_2|}{|y-y'|^{\tilde \alpha}} \\
   \le &\frac C{h}\sum_{j=k+1}^{\ell -1}\int_{t_j}^{t_{j+1}}\int_{t_j}^{t_{j+1}}\int_{\R^d}g_c(t_j-t_k,z-x)\frac{|b(s,z)|}{(t-t_j)^{\frac{1+{\tilde \alpha}} 2}}\\
   &\hspace*{2cm}\times\left(g_c(t-t_j,y'-z)+g_c(t-t_j,y-z)\right)dzdsdr\\
                                    \le & C\left(g_c(t-t_k,y'-x)+g_c(t-t_k,y-x)\right)\sum_{j=k+1}^{\ell -1}\int_{t_j}^{t_{j+1}}\frac{(t-t_k)^{\frac d{2\rho}}\|b(s,.)\|_{L^\rho}}{(t_j-t_k)^{\frac d{2\rho}}(t-s)^{\frac{1+{\tilde \alpha}} 2+\frac d{2\rho}}}ds.\end{align*}
 Since $t_j-t_k\ge \frac{s-t_k} 2$ for $s\in[t_j,t_{j+1}]$ with $j\ge k+1$ and when $j<\frac{k-1+\lfloor\frac t h\rfloor}{2}$, $t_{j+1}\le \frac{t_k+\tau^h_t}{2}$ so that $t-t_{j+1}\ge\frac{t-t_k}{2}$ while when $j\ge \frac{k-1+\lfloor\frac t h\rfloor}{2}$, $t_j-t_k\ge \frac{\tau^h_t-h-t_k}{2}\ge \frac{t-t_k}{4}$, the last sum is smaller than
                                  \begin{align*}
         &\frac{2^{\frac{1+{\tilde \alpha}} 2+\frac d{\rho}}}{(t-t_k)^{\frac{1+{\tilde \alpha}} 2}}\sum_{j=k+1}^{\ell -1}\I_{\{j<\frac{k-1+\lfloor\frac t h\rfloor}{2}\}}\int_{t_j}^{t_{j+1}}\frac{\|b(s,.)\|_{L^\rho}}{(s-t_k)^{\frac d{2\rho}}}ds\\
          &+4^{\frac d{2\rho}}\sum_{j=k+1}^{\ell -1}\I_{\{j\ge \frac{k-1+\lfloor\frac t h\rfloor}{2}\}}\int_{t_j}^{t_{j+1}}\frac{\|b(s,.)\|_{L^\rho}}{(t-s)^{\frac{1+{\tilde \alpha}} 2+\frac d{2\rho}}}ds\\
          \le &C\|b\|_{L^q-L^\rho}\frac{(\frac{t_k+\tau^h_t}{2}-t_k)^{\frac{1+\alpha} 2}}{(t-t_k)^{\frac{1+{\tilde \alpha}} 2}}+C\|b\|_{L^q-L^\rho}(t-t_\ell)^{\frac{\alpha-{\tilde \alpha}}{2}},                       
                                  \end{align*}
                                  where we used H\"older's inequality in time for the last inequality. Since $t-t_\ell\ge t-u=|y-y'|^2$ and $|y-y'|^2\le\frac{t-t_k}{4}$, we deduce that
                                  \begin{align*}
                                   |T_2|\le C|y-y'|^{\alpha}\left(g_c(t-t_k,y'-x)+g_c(t-t_k,y-x)\right).
                                  \end{align*}

Using $|b_h|\le |b|$, \eqref{estigausschem}, \eqref{estig} and \eqref{CTR_GRAD_GAUSS}, then Gaussian convolution, we obtain that
\begin{align*}
  {\mathcal E}^h_r
  &\le \frac{C}{h}\int_{\tau^h_r}^{\tau^h_r+h}\int_{\R^d}g_c(\tau^h_r-t_k,z-x)\frac{|b(s,z)|}{(t-r)^{\frac 1 2}}\left(g_c(t\!-\!\tau^h_r,y'\!-\!z)+g_c(t\!-\!\tau^h_r,y\!-\!z)\right)dzds.
\end{align*}

When $u<\tau^h_t-h$ which implies that $t-u>h$, using $t-r\ge \frac{t-\tau^h_r}{2}$ for $r\le \tau^h_t-h$, 
 then H\"older's inequality in space and last that $\tau^h_u-t_k\ge u-h-t_k\ge t-t_k-2(t-u)=t-t_k-2|y-y'|^2\ge \frac{t-t_k}{2}$ and H\"older's inequality in time, we deduce that
\begin{align*}
  |T_3|
            &\le C\left(g_c(t-t_k,y'-x)+g_c(t-t_k,y-x)\right)\sum_{j=\lfloor \frac u h\rfloor}^{\lfloor \frac t h\rfloor-2}\int_{t_j}^{t_{j+1}}\frac{(t-t_k)^{\frac d{2\rho}}\|b(s,.)\|_{L^\rho}}{(t_j-t_k)^{\frac d{2\rho}}(t-s)^{\frac 1 2+\frac d{2\rho}}}ds\\
              &\le C\left(g_c(t-t_k,y'-x)+g_c(t-t_k,y-x)\right)\|b\|_{L^q-L^\rho}(t-\tau^h_u)^{\frac{\alpha}{2}},
\end{align*}
where, by definition of $u$ and since $u<\tau^h_t-h$, $(t-\tau^h_u)^{\frac{\alpha}{2}}\le (|y'-y|^2+h)^{\frac{\alpha}{2}}<(2|y-y'|^2)^{\frac{\alpha}{2}}$.

\section{Density estimates for the diffusion}\label{Sec-Diff}
The section is devoted to the proof of Proposition \ref{THM_HK_CONT}. 
We focus without loss of generality on the case $\alpha<1 $, which in particular implies that either $\rho$ or $q$ is finite.  Indeed,
  when $b\in L^\infty-L^\infty$ i.e. $\alpha=1 $, since we are considering a compact time interval, $b$ also belongs to $L^q-L^\infty$ for each $q\in (2,\infty)$. Notice that, for a time-space bounded drift, the estimates of the proposition are known, 
see e.g. \cite{meno:pesc:zhan:21}.

Using $|b_h|\le Bh^{-\left(\frac 1 q+\frac d {2\rho}\right)}\wedge|b|$, the independence between $U_{\lfloor \frac s h\rfloor}$ and $X^h_{\tau^h_s}$, then \eqref{estigausschem}, H\"older's inequality in space combined with $\|g_c(t,.)\|_{L^{\rho'}}={\rho'}^{-\frac{d}{2\rho'}}(2\pi c t)^{-(1-\frac 1{\rho'})\frac d 2}$, that $t_k\ge \frac s 2$ for $s\in[t_k,t_{k+1}]$ with $k\ge 1$ and last H\"older's inequality in time, we obtain
 \begin{align}
   &\E\left[\int_0^T\left|b_h\left(U_{\lfloor \frac s h\rfloor},X^h_{\tau^h_s}\right)\right|^2ds\right]\notag\\
    \le&  B^2h^{\alpha}+\E\left[\int_h^T\left|b\left(s,X^h_{\tau^h_s}\right)\right|^2ds\right]\notag\\
                                                                                                       \le &B^2h^{\alpha}+\sum_{k=1}^{n-1}\int_{t_k}^{t_{k+1}}\int_{\R^d}|b(s,y)|^2g_c(t_k,y-x)dyds\notag\\\le &B^2h^{\alpha}+C\sum_{k=1}^{n-1}\int_{t_k}^{t_{k+1}}\|b(s,.)\|^2_{L^\rho}\|g_c(t_k,.)\|_{L^{\frac{\rho}{\rho-2}}}ds\notag\\\le & B^2h^{\alpha}+C\sum_{k=1}^{n-1}t_k^{-\frac d\rho}\int_{t_k}^{t_{k+1}}\|b(s,.)\|^2_{L^\rho}ds\le B^2h^{\alpha}+C\int_0^Ts^{-\frac d \rho}\|b(s,.)\|^2_{L^\rho}ds\notag \\
   \le & B^2T^{\alpha}+C\|b\|_{L^q-L^\rho}^2T^{\alpha}.\label{majocar}
 \end{align}
Since, by the Cauchy-Schwarz inequality,
$$\forall 0\le u\le t\le T,\;\left|\int_u^tb_h\left(U_{\lfloor \frac s h\rfloor},X^h_{\tau^h_s}\right)ds\right|\le (t-u)^{1/2}\left(\int_0^T\left|b_h\left(U_{\lfloor \frac s h\rfloor},X^h_{\tau^h_s}\right)\right|^2ds\right)^{1/2},$$
with the Ascoli-Arzel\`a theorem, we deduce the tightness of the laws of the continuous processes $$\left(\int_0^tb_h\left(U_{\lfloor \frac s h\rfloor},X^h_{\tau^h_s}\right)ds\right)_{t\in[0,T]}$$ indexed by $h=\frac T n$ with $n\in\N^*$, when the space $\cal C$ of continuous functions from $[0,T]$ to $\R^d$ is endowed with the supremum norm. With the continuity of the sum on this space, we deduce that the laws $P^h$ of $X^h$ are tight. We may extract a subsequence still denoted by $(P^h)$ for notational simplicity such that $P^{h}$ weakly converges to some limit $P$ as $h\to 0$. For fixed $t\in(0,T]$, the weak convergence of $P^{h}_t(dy)=\Gamma^h(0,x,t,y)dy$ to $P_t(dy)$ together with \eqref{estischemspace},  \eqref{estigausschem} and the Ascoli-Arzel\`a theorem, ensure that $P_t(dy)=\Gamma(0,x,t,y)dy$ for some function $\Gamma$ satisfying \eqref{HKholdspace} and \eqref{HK_AND_GRAD_SPACE_DENS} .

Let $\varphi:\R^d\to\R$ be a $C^2$ function with compact support, $\psi:(\R^d)^p\to\R$ be continuous and bounded, $0\le s_1\le s_2\le\ldots\le s_p\le u\le t\le T$ with $u>0$ and $F$ denote the functional on ${\cal C}$ defined by
$$F(\xi)=\left(\varphi(\xi_t)-\varphi(\xi_u)-\int_u^t(\frac{1}{2}\Delta\varphi(\xi_s)+b(s,\xi_s).\nabla\varphi(\xi_s))ds\right)\psi(\xi_{s_1},\cdots,\xi_{s_p}).$$
We are going to check in the last step of the proof that $\lim_{h\to 0}\E[F(X^h)]=0$. Unfortunately, the lack of continuity of the functional $F$ on ${\mathcal C}$ prevents from deducing immediately that $\int_{\cal C}F(\xi)P(d\xi)=0$. That is why we introduce for $\varepsilon\in(0,1]$,  a smooth and bounded function $b_\varepsilon $  approximating the original drift $b$ in \eqref{eds} such that, setting $b_\varepsilon^K(t,x)=\I_{[-K,K]^d}(x)b_\varepsilon(t,x)$ and $b^K(t,x)=\I_{[-K,K]^d}(x)b(t,x)$ for $K\in\N^*$, 
\begin{align}
                                                              &\forall K\in\N^*,\;\|b^K_\varepsilon-b^K\|_{L^{\tilde q}-L^{\tilde \rho}} \underset{\varepsilon \rightarrow 0}{\longrightarrow} 0\label{convbepsb}\\
 &\mbox{ with }(\tilde \rho,\tilde q)=\begin{cases}
     (\rho,q)\mbox{ if }\rho<\infty \mbox{ and }q<\infty,
     \\
      (\rho,\frac{2\rho +1}{\rho-d})\mbox{ if }\rho<\infty\mbox{ and }q=\infty,\\
   (\frac{dq+1}{q-2},q)\mbox{ if }\rho=\infty \mbox{ and }q<\infty
     .
   \end{cases}\notag\end{align}Note that $\frac{d}{\tilde \rho}+\frac{2}{\tilde q}<1$.
The functional $F_\varepsilon$ defined like $F$ but with $b_\varepsilon$ replacing $b$ is continuous and bounded and therefore, for fixed $\varepsilon\in (0,1]$, $\int_{\cal C}F_\varepsilon(\xi)P(d\xi)=\lim_{h\to 0}\E[F_\varepsilon(X^h)]=\lim_{h\to 0}\E[F_\varepsilon(X^h)-F(X^h)]$. We deduce that
$$\left|\int_{\cal C}F(\xi)P(d\xi)\right|\le \limsup_{\varepsilon\to 0}\int_{\cal C}|F(\xi)-F_\varepsilon(\xi)|P(d\xi)+\limsup_{\varepsilon\to 0}\limsup_{h\to 0}\E[|F_\varepsilon(X^h)-F(X^h)|].$$
Let $K\in\N^*$ be such that $\varphi$ vanishes outside $[-K,K]^d$. One has, using \eqref{estigausschem} then H\"older's inequality in space together with $\|g_c(s,.)\|_{L^{\frac{\tilde\rho}{\tilde\rho-1}}}=\left(\frac{\tilde\rho}{\tilde\rho-1}\right)^{-\frac{d(\tilde\rho-1)}{2\tilde\rho}}(2\pi c s)^{-\frac d {2\tilde\rho}}$ and last H\"older's inequality in time,
\begin{align*}
   \E[|F_\varepsilon(X^{h})-F(X^{h})|]&\le \|\psi\|_{L^\infty}\|\nabla\varphi\|_{L^\infty}\int_u^t\E[|b^K_\varepsilon(s,X^{h}_s)-b^K(s,X^{h}_s)
  |]ds\notag\\
                                      &\le C\|\psi\|_{L^\infty}\|\nabla\varphi\|_{L^\infty}\int_u^t\int_{\R^d}|b^K_\varepsilon(s,y)-b^K(s,y)|g_c(s,y-x)dyds\notag\\
                                      &\le C\|\psi\|_{L^\infty}\|\nabla\varphi\|_{L^\infty}\int_u^t\frac{\|b^K_\varepsilon(s,.) -b^k(s,.)\|_{L^{\tilde \rho}}ds}{s^{\frac{d}{2\tilde \rho}}}\\&\le C\|\psi\|_{L^\infty}\|\nabla\varphi\|_{L^\infty}\|b^K_\varepsilon-b^k\|_{L^{\tilde q}-L^{\tilde \rho}}(t-u)^{1-\left(\frac 1 {\tilde q}+\frac d{2\tilde \rho}\right)}.\end{align*}
                                    Since the same estimation holds for $\int_{\cal C}|F(\xi)-F_\varepsilon(\xi)|P(d\xi)$, we conclude that $\int_{\cal C}F(\xi)P(d\xi)$ $=0$. Taking $\varphi,\psi,u,s_1,\ldots,s_p,t$ in countable dense subsets, we deduce that $P$ solves the martingale problem associated with the stochastic differential equation
$$X_t=x+W_t+\int_0^tb(s,X_s)dr,\;t\in[0,T].$$
Since by \cite{kryl:rock:05}, existence of a pathwise unique strong solution holds for this equation, $P$ is the distribution of the solution. 

To check that $\lim_{h\to 0}\E[F(X^h)]=0$, we compute $\varphi(X^h_t)-\varphi(X^h_u)$ by It\^o's formula and take expectations to obtain that
\begin{align*}
  \E[F(X^h)]&=\E\left[\left(\int_u^t\left(b_h\left(U_{\lfloor \frac s h\rfloor},X^h_{\tau^h_s}\right)-b(s,X^h_{s})\right).\nabla\varphi(X^h_{s})ds \right)\psi(X_{s_1}^h,\cdots,X_{s_p}^h)\right]\\
  &=\Delta_1+\Delta_2+\Delta_3+\Delta_4\mbox{ with }\\
  \Delta_1&=\E\left[\left(\int_u^tb_h\left(U_{\lfloor \frac s h\rfloor},X^h_{\tau^h_s}\right).\left(\nabla\varphi(X^h_{s})-\nabla\varphi(X^h_{\tau^h_s})\right)ds\right)\psi(X_{s_1}^h,\cdots,X_{s_p}^h)\right],\\
  \Delta_2&=\E\left[\left(\int_u^t\left(b_h\left(U_{\lfloor \frac s h\rfloor},X^h_{\tau^h_s}\right)-b_h(s,X^h_{\tau^h_s})\right).\nabla\varphi(X^h_{\tau^h_s})ds\right)\psi(X_{s_1}^h,\cdots,X_{s_p}^h)\right],\\
  \Delta_3&=\E\left[\left(\int_u^t\left(b_h(s,X^h_{\tau^h_s})-b(s,X^h_{\tau^h_s})\right).\nabla\varphi(X^h_{\tau^h_s})ds\right)\psi(X_{s_1}^h,\cdots,X_{s_p}^h)\right],\\\Delta_4&=\E\left[\left(\int_u^t\left(b(s,X^h_{\tau^h_s}).\nabla\varphi(X^h_{\tau^h_s})-b(s,X^h_{s}).\nabla\varphi(X^h_{s})\right)ds\right)\psi(X_{s_1}^h,\cdots,X_{s_p}^h)\right].
\end{align*}
By the Cauchy-Schwarz inequality and \eqref{majocar},
\begin{align*}
  |\Delta_1|&\le C\|\psi\|_{L^\infty}\left(\int_u^t\E\left|\nabla\varphi(X^h_{s})-\nabla\varphi(X^h_{\tau^h_s})\right|^2ds\right)^{\frac{1}{2}}\\
  &\le C\|\psi\|_{L^\infty}\|\nabla^2\varphi\|_{L^\infty}\left(\int_u^t\E\left[|W_s-W_{\tau^h_s}|^2+(s-\tau^h_s)\int_{\tau^h_s}^s\left|b_h\left(U_{\lfloor \frac r h\rfloor},X^h_{\tau^h_r}\right)\right|^2 dr\right]ds\right)^{\frac{1}{2}}\\
  &\le C h^{\frac 1 2}.
\end{align*}
Since $\E\left[\int_{\lceil\frac u h\rceil h}^{\tau^h_t}\left(b_h\left(U_{\lfloor \frac s h\rfloor},X^h_{\tau^h_s}\right)-b_h(s,X^h_{\tau^h_s})\right).\nabla\varphi(X^h_{\tau^h_s})ds| \sigma(X_r^h,\ r\in [0,\lceil\frac u h\rceil h])
\right]=0$ and $$\E\left[\int_0^T\left|b_h\left(s,X^h_{\tau^h_s}\right)\right|^2ds\right]=\E\left[\int_0^T\left|b_h\left(U_{\lfloor \frac s h\rfloor},X^h_{\tau^h_s}\right)\right|^2ds\right],$$ we have, using \eqref{majocar} and the Cauchy-Schwarz inequality for the second inequality
\begin{align*}
   |\Delta_2|\le& \|\psi\|_{L^\infty}\|\nabla\varphi\|_{L^\infty}\E\left[\int_u^{\lceil\frac u h\rceil h}\left|b_h\left(U_{\lfloor \frac s h\rfloor},X^h_{\tau^h_s}\right)\right|+|b_h(s,X^h_{\tau^h_s})|ds\right.\\
   &\hspace*{1cm}\left.+\int_{\tau^h_t}^t\left|b_h\left(U_{\lfloor \frac s h\rfloor},X^h_{\tau^h_s}\right)\right|+|b_h(s,X^h_{\tau^h_s})|ds\right]
   \le C h^{\frac 1 2}.
\end{align*}
Since $|b_h-b|\le |b|\I_{\{|b|\ge B h^{-\left(\frac 1 q+\frac d{2\rho}\right)}\}}\le B^{-1}h^{\frac 1 q+\frac d{2\rho}}|b|^2
$, we have, using \eqref{majocar} for the last inequality,
\begin{align*}
   \forall h\le u,\;|\Delta_3|\le B^{-1}h^{\frac 1 q+\frac d{2\rho}}\|\psi\|_{L^\infty}\|\nabla\varphi\|_{L^\infty}\E\left[\int_h^T\left|b\left(s,X^h_{\tau^h_s}\right)\right|^2ds\right]\le Ch^{\frac 1 q+\frac d{2\rho}}.
\end{align*}

Last, for $h\le u$, using \eqref{estischemtemps} and that $\tau^h_s\ge \frac{s}{2}$ for $s\ge h$ for the second inequality, then $\|g_c(s,.)\|_{L^{\bar \rho}}={\bar \rho}^{-\frac{d}{2\bar\rho}}(2\pi c s)^{-(1-\frac 1{\bar \rho})\frac d 2}$ with $\frac 1{\bar\rho}=1-\frac 1{\rho}$, H\"older's inequality in space and in time, we obtain that

\begin{align*}
  |\Delta_4|\le& \|\psi\|_{L^\infty} \|\nabla\varphi\|_{L^\infty}\int_u^t\int_{\R^d}|\Gamma^h(0,x,\tau^h_s,y)-\Gamma^h(0,x,s,y)||b(s,y)|dy ds\\
  \le& C\int_u^t\int_{\R^d}\frac{(s-\tau^h_s)^{\frac{\alpha}{2}}}{s^{\frac{\alpha}{2}}}g_c(s,y-x)|b(s,y)|dy ds\le C h^{\frac{\alpha}{2}}\int_u^t\frac{\|b(s,.)\|_{L^\rho}ds}{s^{\frac{\alpha}{2}+\frac d{2\rho}}}\\
  \le& C h^{\frac{\alpha}{2}}\|b\|_{L^q-L^\rho}(t-u)^{\frac 12}.
\end{align*}
We conclude that $\lim_{h\to 0}\E[F(X^h)]=0$.

\appendix

Let us eventually check that the  Duhamel representation \eqref{DUHAMEL_DIFF_PROP} for the density of the diffusion holds. This can be done by reasoning like in the above derivation of \eqref{DUHAMEL_SCHEME_PROP} in Subsection \ref{densduh} and using that by \eqref{CTR_GRAD_GAUSS}, \eqref{HK_AND_GRAD_SPACE_DENS} and Lemma \ref{GAUSS_INT_LEMMA},
\begin{align*}
  &\int_{0}^{ t}\E\left[\left|b(r,X_{r})\cdot\nabla_y g_1(t-r,y-X_r)\right|\right]dr\\
  \le& C\int_0^t\int_{\R^d}g_c(r,z-x)|b(r,z)|\frac{g_c(t-r,y-z)}{(t-r)^{\frac 12}}dzdr\\
  \le &C\|b\|_{L^q-L^\rho}g_c(t,y-x)t^{\frac{\alpha}{2}}<+\infty.
\end{align*}

\section{Proof of Lemma \ref{LEM_GAUS_SENS}: usual Gaussian estimates}\label{SEC_PROOF_USUAL_GAUSS_EST}
\begin{proof}
The bound \eqref{CTR_GRAD_GAUSS} is standard. If $|x-x'|\ge u^{\frac 12} $ then \eqref{DIFF_GRAD_GAUSS_LEM} precisely follows from the first inequality in \eqref{CTR_GRAD_GAUSS}. Namely,
$$\Big|\nabla^\zeta _x {\tildeGamma}(u,x)-\nabla^\zeta _x {\tildeGamma}(u,x')\Big|\le\big|\nabla^\zeta_x {\tildeGamma}(u,x)\big|+\big|\nabla^\zeta_x {\tildeGamma}(u,x')\big|\le \frac C{u^{\frac{|\zeta|}2}} \big(g_c(u,x)+g_c(u,x')\big).$$Assume now $|x-x'|\le u^{1/2} $. Write
\begin{align*}
\nabla^\zeta_x {\tildeGamma}(u,x')-\nabla^\zeta_x {\tildeGamma}(u,x)=\int_0^1 d\lambda \nabla_x \nabla^\zeta_x{\tildeGamma}(u,x+\lambda(x'-x))\cdot(x'-x).
\end{align*}
Observe now that  from the first inequality in \eqref{CTR_GRAD_GAUSS} applied with $c$ and $\zeta$ replaced by $\frac{1+c}{2}>1$ and the sum of $\zeta$ and a vector in the canonical basis of $\R^d$ (note that \eqref{CTR_GRAD_GAUSS} remains valid for multi-indices with length bounded from above by $3$) :
\begin{align}\label{AFTER_TAYL_G}
|\nabla_x^\zeta {\tildeGamma}(u,x')-\nabla_x^\zeta {\tildeGamma}(t-s,x)|\le C\frac{|x-x'|}{u^{\frac {d+1+|\zeta|}{2}}} \int_0^1 d\lambda \exp\left(-\frac{|x+\lambda(x'-x)|^2}{(1+c)u} \right).
\end{align}
Recall now that since $\lambda \in [0,1] $ and that  $|x-x'|\le u^{\frac 12} $:
\begin{align*}
|x+\lambda(x'-x)|^2\ge &\frac{1+c}{2c}|x|^2-\frac{c+1}{c-1}\lambda^2|x-x'|^2\ge \frac{1+c}{2c}|x|^2-\frac{c+1}{c-1}|x-x'|^2\\
\ge& \frac{1+c}{2c}|x|^2-\frac{c+1}{c-1}u,
\end{align*}
which plugged into \eqref{AFTER_TAYL_G} yields:
\begin{eqnarray*}
|\nabla_x^\zeta {\tildeGamma}(u,x)-\nabla_x^\zeta {\tildeGamma}(u,x')|&\le& C\frac{|x-x'|}{u^{\frac {d+1+|\zeta|}{2}}}  \exp\left(-\frac{|x|^2}{2cu} \right)\exp\left(\frac{1}{c-1}\right),
\end{eqnarray*}
which, up to a modification of the constant $C$, concludes the proof of \eqref{DIFF_GRAD_GAUSS_LEM}.
The bound \eqref{DIFF_GRAD_GAUSS_LEM_TIME} is obtained the same way by bounding from above each term of the difference using the first inequality in \eqref{CTR_GRAD_GAUSS} when $|u'-u|\ge u$ and by integrating the second inequality in \eqref{CTR_GRAD_GAUSS} and using that for $v\in[u,u']$, $\frac{g_c(v,x)}{v^{1+\frac{|\zeta|}2}}\le\frac{u'^{\frac{d}{2}}g_c(u',x)}{u^{\frac{d}{2}+1+\frac{|\zeta|}2}}\le 2^{\frac{d}{2}}\frac{g_c(u',x)}{u^{1+\frac{|\zeta|}2}}$ when $|u'-u|\le u$.
\end{proof}

\section{Proof of Lemma \ref{GR_VOL_LEMMA}}\label{APP_GR}
\begin{proof}{\bf (i)}
Iterating \eqref{sansdeux} $n\ge 1$ times, we get that for all $t\in[0,T]$,
\begin{align*}
    f(t)\le &\eta\left(1+\sum_{k=1}^{n-1}\delta^kt^\beta\int_0^ts_1^{\beta-\tilde\beta}\int_0^{s_1}s_2^{\beta-\tilde\beta}\hdots\int_0^{s_{k-2}}s_{k-1}^{\beta-\tilde\beta}\int_0^{s_{k-1}}\frac{ds_k}{s_k^{\tilde\beta}}ds_{k-1}\hdots ds_1\right)\\&+\delta^nt^\beta\int_0^ts_1^{\beta-\tilde\beta}\int_0^{s_1}s_2^{\beta-\tilde\beta}\hdots\int_0^{s_{n-2}}s_{n-1}^{\beta-\tilde\beta}\int_0^{s_{n-1}}\frac{f(s_n)ds_n}{s_n^{\tilde\beta}}ds_{n-1}\hdots ds_1
\end{align*}
The term with index $k$ in the sum is equal to
$\frac{\delta^k t^{k(1+\beta-\tilde\beta)}}{\prod_{j=1}^k(j(1+\beta-\tilde\beta)-\beta)}$ so that the first line in the right-hand side is not greater than the finite constant not depending on $f$ \begin{align*}
   C_{\beta,\tilde \beta,\eta,\delta,T}=&\eta+\eta\sum_{k\ge 1}\frac{\delta^k T^{k(1+\beta-\tilde\beta)}}{\prod_{j=1}^k(j(1+\beta-\tilde\beta)-\beta)}\le \eta+\eta\sum_{k\ge 1}\frac{\delta^k T^{k(1+\beta-\tilde\beta)}}{\prod_{j=1}^kj(1+\beta\wedge 0-\tilde\beta)}\\=&\eta e^{\frac{\delta T^{1+\beta-\tilde\beta}}{1+\beta\wedge 0-\tilde\beta}} .
\end{align*}On the other hand, the last term in the right-hand side is bounded from above by $\frac{\delta^n t^{n(1+\beta-\tilde\beta)}}{\prod_{j=1}^n(j(1+\beta-\tilde\beta)-\beta)}\sup_{s\in[0,t]}f(s)$ and converges to $0$ as $n\to\infty$.

{\bf (ii)} If $\hat\beta\le 0$, then \eqref{sansdeux} holds with $\eta=a$, $\delta=b$ and $\beta=\check \beta-\hat\beta$ which is larger than $\tilde\beta-1$ so that the conclusion follows by {\bf (i)}. Let us suppose that $\hat\beta> 0$ and check that by iterating the inequality \eqref{avecdeux}, we obtain \eqref{sansdeux} for some finite constants $\eta,\delta$ and some $\beta>\tilde\beta-1$. Iterating \eqref{avecdeux} once, we obtain
  \begin{align}
    \forall t\in[0,T],\;f(t)&\le a+ab t^{\check{\beta}}\int_0^t\frac{ds}{s^{\tilde \beta}(t-s)^{\hat\beta}}+b^2t^{\check{\beta}}\int_0^t\frac{1}{s^{\tilde \beta-\check{\beta}}(t-s)^{\hat\beta}}\int_0^s\frac{f(u)du}{u^{\tilde \beta}(s-u)^{\hat\beta}}ds.\label{premit}\end{align}
  We now  set $\gamma:= 1+\check{\beta}-\tilde\beta-\hat\beta>0$ and distinguish two situations depending on the sign of $\tilde \beta-\check{\beta}$. Let us first suppose that $\tilde \beta-\check{\beta}\ge 0$. Using Fubini's theorem and the inequality $s^{\tilde \beta-\check{\beta}}\ge (s-u)^{\tilde \beta-\check{\beta}}$, we obtain
  \begin{align*}
    f(t)&\le a+ab T^{\gamma}B(1-\tilde\beta,1-\hat\beta)+b^2t^{\check{\beta}}\int_0^t\frac{f(u)}{u^{\tilde \beta}}\int_u^t\frac{ds}{(s-u)^{\hat\beta+\tilde \beta-\check{\beta}}(t-s)^{\hat\beta}}du\\
        &=a_1+b_1t^{\check{\beta}}\int_0^t\frac{f(u)du}{u^{\tilde \beta}(t-u)^{\hat\beta-\gamma}}
  \end{align*}
  with $a_1=a+ab T^{\gamma}B(1-\tilde\beta,1-\hat\beta)$ and $b_1=b^2B(\gamma,1-\hat\beta)$.
  In comparison with \eqref{avecdeux}, after this first step, the power of the last factor in the denominator has decreased from $\hat\beta$ to $\hat\beta-\gamma$. We may now at each step iterate the inequality obtained from the previous step and obtain after $n$ steps
  $$\forall t\in[0,T],\;f(t)\le a_n+b_nt^{\check{\beta}}\int_0^t\frac{f(s)ds}{s^{\tilde \beta}(t-s)^{\hat\beta-(2^n-1)\gamma}}$$
  where $a_n=a_{n-1}+a_{n-1}b_{n-1}T^{2^{n-1}\gamma}B(1\!-\!\tilde\beta,1+(2^{n-1}\!-\!1)\gamma\!-\!\hat\beta)$ and $b_{n}=b_{n-1}^2B(2^{n-1}\gamma,1+(2^{n-1}-1)\gamma-\hat\beta)$.
For $\hat n=\lceil\frac{\ln(1+\hat\beta/\gamma)}{\ln 2}\rceil$, $\hat\beta-(2^{\hat n}-1)\gamma\le 0$, so that \eqref{sansdeux} holds for $\eta=a_{\hat n}$ and $\delta=b_{\hat n}$ and $$\beta=\check\beta+(2^{\hat n}-1)\gamma-\hat\beta=2^{\hat n}\gamma+\tilde\beta-1>\tilde\beta-1\mbox{ since }\hat n\ge 1\mbox{ as }\hat\beta>0.$$

Let us now suppose that $\tilde\beta-\check\beta<0$. Inserting the inequality $s^{\check\beta-\tilde\beta}\le t^{\check\beta-\tilde\beta}$ in \eqref{premit}, we get
\begin{align*}
   f(t)&\le a+ab T^\gamma B(1-\tilde\beta,1-\hat\beta)+b^2t^{2\check{\beta}-\tilde\beta}\int_0^t\frac{f(u)}{u^{\tilde \beta}}\int_u^t\frac{ds}{(s-u)^{\hat\beta}(t-s)^{\hat\beta}}du\\&= a_1+b_1t^{2\check{\beta}-\tilde\beta}\int_0^t\frac{f(u)du}{u^{\tilde \beta}(t-u)^{2\hat\beta-1}}
\end{align*}
with $a_1=a+ab T^{\gamma}B(1-\tilde\beta,1-\hat\beta)$ and $b_1=b^2B(1-\hat\beta,1-\hat\beta)$.
In comparison with \eqref{avecdeux}, after this first step, the power of $t$ has increased from $\check\beta$ to $2\check\beta-\tilde\beta$ and the power in the last factor of the denominator has decreased from $\hat\beta$ to $2\hat\beta -1$.
We may now at each step iterate the inequality obtained from the previous step and obtain after $n$ steps
  $$\forall t\in[0,T],\;f(t)\le a_n+b_nt^{2^n(\check{\beta}-\tilde\beta)+\tilde\beta}\int_0^t\frac{f(s)ds}{s^{\tilde \beta}(t-s)^{2^n(\hat\beta-1)+1}}$$
  where $a_n=a_{n-1}+a_{n-1}b_{n-1}T^{2^{n-1}\gamma}B(1-\tilde\beta,2^{n-1}(1-\hat\beta))$ and $b_{n}=b_{n-1}^2B(2^{n-1}(1-\hat\beta),2^{n-1}(1-\hat\beta))$. For $\hat n=\lceil\frac{-\ln(1-\hat\beta)}{\ln 2}\rceil$, $2^{\hat n}(\hat\beta-1)+1\le 0$, so that \eqref{sansdeux} holds for $\eta=a_{\hat n}$ and $\delta=b_{\hat n}$ and $$\beta=2^{\hat n}(\check{\beta}-\tilde\beta)+\tilde\beta+2^{\hat n}(1-\hat\beta)-1=2^{\hat n}\gamma+\tilde\beta-1>\tilde\beta-1\mbox{ since }\hat n\ge 1\mbox{ as }\hat\beta>0.$$
\end{proof}

\section*{Acknowledgments.}
For the second author, the article was prepared within the framework of the HSE University Basic Research Program.

\end{document}